\newtheorem{corollary}{Corollary}[section]
\newtheorem{lemma}[corollary]{Lemma}
\newtheorem{proposition}[corollary]{Proposition}
\newtheorem{remark}[corollary]{Remark}
\newtheorem{theorem}[corollary]{Theorem}
\newcommand{\mylabel}[1]{\label{#1}
            \ifx\undefined\stillediting
            \else \fbox{$#1$}\fi }
\newcommand{\BE}{\begin{equation}}
\newcommand{\EEQ}{\end{equation}}
\newcommand{\rfb}[1]{\mbox{\rm
   (\ref{#1})}\ifx\undefined\stillediting\else:\fbox{$#1$}\fi}
\newfont{\Blackboard}{msbm10 scaled 1200}
\newcommand{\bl}[1]{\mbox{\Blackboard #1}}
\newfont{\roma}{cmr10 scaled 1200}
\def\CC{\rm \hbox{C\kern-.56em\raise.4ex
         \hbox{$\scriptscriptstyle |$}\kern+0.5 em }}
\newcommand{\rline}  {{\bl R}}
\newcommand{\mm}    {{\hbox{\hskip 0.5pt}}}
\newcommand{\bluff} {{\hbox{\raise 15pt \hbox{\mm}}}}
\newcommand{\dd}     {{\rm d}}
\def\section{\@startsection {section}{1}{\z@}{-3.5ex plus -1ex minus
    -.2ex}{2.3ex plus .2ex}{\large\bf}}
\def\be{\begin{equation}}
\def\ee{\end{equation}}
\def\ds{\displaystyle}
\begin{document}
\thispagestyle{empty}
\title{Stability of the wave equations on a tree with local Kelvin-Voigt damping}
\author{Ka\"{\i}s Ammari}
\address{UR Analysis and Control of PDEs, UR 13ES64, Department of Mathematics, Faculty of Sciences of Monastir, University of Monastir, Tunisia}
\email{kais.ammari@fsm.rnu.tn} 
\author{Zhuangyi Liu}

\address{Department of Mathematics and Statistics, University of Minnesota, Duluth, MN 55812-3000, United States} 
\email{zliu@d.umn.edu}

\author{Farhat Shel}
\address{UR Analysis and Control of PDEs, UR 13ES64, Department of Mathematics, Faculty of Sciences of Monastir, University of Monastir, Tunisia}
\email{farhat.shel@ipeit.rnu.tn} 

\begin{abstract} 
In this paper we study the stability problem of a tree of elastic strings with local Kelvin-Voigt damping on some of the edges. Under the compatibility condition of displacement and strain and continuity condition of damping coefficients at the vertices of the tree, exponential/polynomial stability are proved. Our results generalizes the cases of single elastic string with local Kelvin-Voigt damping in \cite{lr, liu111, amregvalmer}. 
\end{abstract}

\subjclass[2010]{35B35, 35B40, 93D20}
\keywords{Tree, dissipative wave operator, Kelvin-Voigt damping, Frequency approach}

\maketitle

\tableofcontents
%
%
\section{Introduction} \label{secintro}

\setcounter{equation}{0}
In this paper, we investigate the asymptotic stability of a tree of elastic strings with local Kelvin-Voigt damping. We first introduce some notations needed to formulate the problem under consideration. Let ${\mathcal T}$ be a tree ( i.e. ${\mathcal T}$ is a planar connected graph without closed paths). 
\begin{enumerate}
	\item[] degree of a vertex -- number of incident edges at that vertex
	\item[] ${\mathcal R}$ -- root of ${\mathcal T}$, a designated vertex with degree 1
	\item[] exterior vertex -- vertex with degree 1
	\item[] interior vertex -- vertex with degree greater than 1
	\item[] $e$ -- the edge incident the root ${\mathcal R}$
	\item[] ${\mathcal O}$ -- the vertex of ${\mathcal T}$ other than ${\mathcal R}$ 
	\item[] $\bar{\alpha}$ -- multi-index of length $k$, $\bar{\alpha}=(\alpha_1, \cdots, \alpha_k)$
	\item[] ${\mathcal O}_{\bar{\alpha}}$ -- vertex of  ${\mathcal T}$ with index  ${\bar{\alpha}}$
	\item[] $e_{\bar{\alpha}}$ -- edge of ${\mathcal T}$ with index ${\bar{\alpha}}$
	\item[] ${\mathcal M}$ -- set of the interior vertices of ${\mathcal T}$
	\item[] ${\mathcal S}$ -- set of the exterior vertices of ${\mathcal T}$, excluding ${\mathcal R}$
	\item[] $I_{{\mathcal M}}$ -- index set of ${\mathcal M}$
	\item[] $I_{{\mathcal S}}$ -- index set of ${\mathcal S}$
\end{enumerate} 

We choose empty index for edge $e$ and vertex ${\mathcal O}$ . 
Assume there are $m_{\bar{\alpha}}$ edges, different from $e_{\bar{\alpha}},$
that branch out from ${\mathcal O}_{\bar{\alpha}},$ we denote these edges by $e_{\bar{\alpha} o \beta}, \, \beta = 1,...,m_{\bar{\alpha}}$ and the other vertex of the edge $e_{\bar{\alpha} o \beta}$ by ${\mathcal O}_{\bar{\alpha} o \beta}$,
i.e. the interior vertex ${\mathcal O}_{\bar{\alpha}}$, contained in the edge $e_{\bar{\alpha}},$ has multiplicity equal to $m_{\bar{\alpha}} + 1$. 

Furthermore, length of the edge $e_{\bar{\alpha}}$ is denoted by $\ell_{\bar{\alpha}}$. Then, $e_{\bar{\alpha}}$ 
may be parametrized by its arc length by means of the functions $\pi_{\bar{\alpha}}$, defined in $[0, \ell_{\bar{\alpha}}]$ 
such that $\pi_{\bar{\alpha}}(\ell_{\bar{\alpha}}) = {\mathcal O}_{\bar{\alpha}}$ and $\pi_{\bar{\alpha}}(0)$ is the other vertex of this edge. 

\begin{center}
	\includegraphics[scale=0.80]{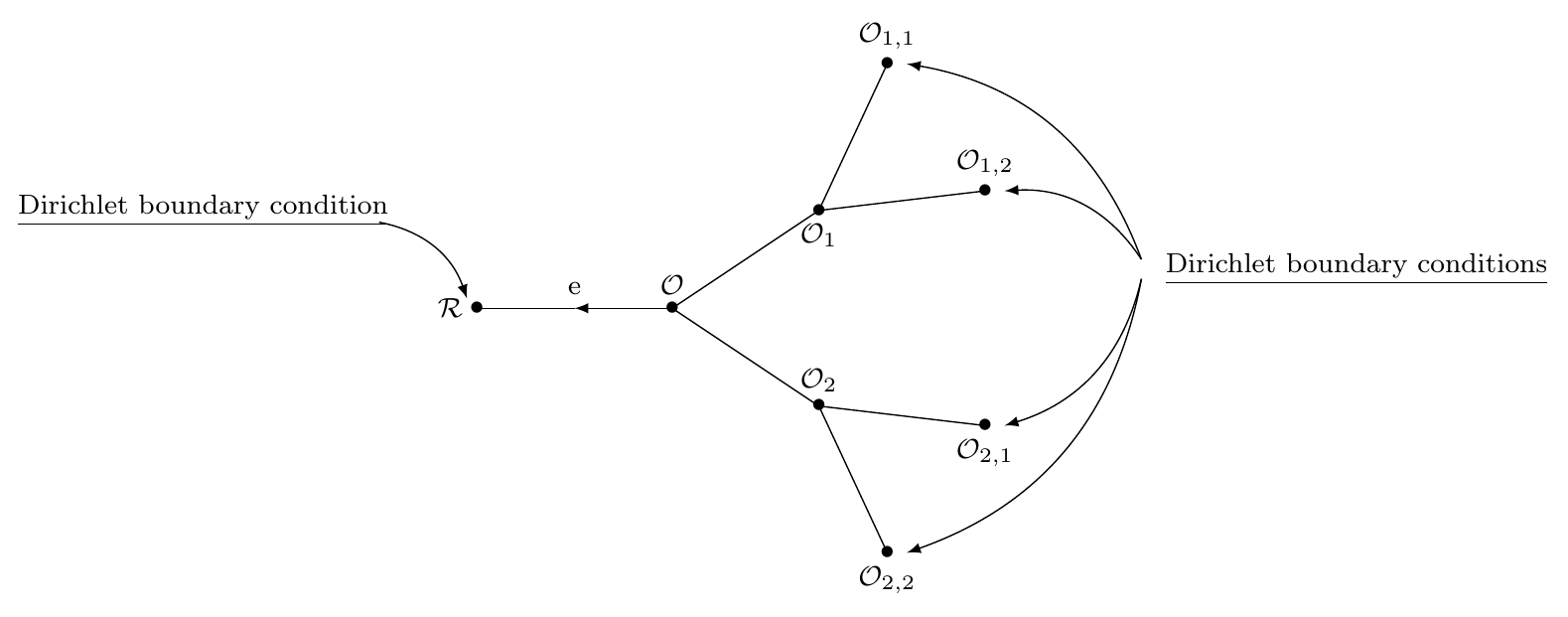} 
	\captionof{figure}{A Tree-Shaped network} 
\end{center} 

Now, we are ready to introduce a planar tree-shaped network of $N$ elastic strings, where $N \geq 2$, see \cite{lagnese,liu1,liu2, hassine, liubis} and \cite{dagerzuazua} concerning the model.  
More precisely, we consider the following initial and boundary
value problem\,: 

\be
\label{ch8eq18}
\frac{\partial^2
u_{\bar{\alpha}}}{\partial t^2}(x,t) - \, \frac{\partial }{\partial x} \left( \frac{\partial u_{\bar{\alpha}}}{\partial x} + a_{\bar{\alpha}}(x) \, 
\frac{\partial^2 u_{\bar{\alpha}}}{\partial x \partial t} \right) (x,t) = 0, \quad
0<x<\ell_{\bar{\alpha}},\ t>0, \, \bar{\alpha} \in I := I_{\mathcal M}\cup I_{\mathcal S}, \ee \be
\label{ch8eq28} u(0,t)= 0, \, u_{\bar{\alpha}}(\ell_{\bar{\alpha}},t)=0,
\, \bar{\alpha} \in I_{{\mathcal S}}, \, t >0, \ee 
\be 
\label{ch8inh}
u_{\bar{\alpha} \circ \beta}(0,t)=
u_{\bar{\alpha}}(\ell_{\bar{\alpha}},t), \quad t>0, \, \beta =
1,2,..., m_{\bar{\alpha}}, \, \bar{\alpha} \in I_{{\mathcal M}}, 
\ee 
\be
\label{ch8cont} \ds \sum_{\beta = 1}^{m_{\bar{\alpha}}}
\frac{\partial u_{\bar{\alpha} \circ \beta}}{\partial x}(0,t)+a_{\bar{\alpha} \circ \beta}(0) \frac{\partial^2 u_{\bar{\alpha} \circ \beta}}{\partial x \partial t}(0,t) =
\frac{\partial u_{\bar{\alpha}}}{\partial x}(\ell_{\bar{\alpha}},t)+a_{\bar{\alpha}}(\ell_{\bar{\alpha}}) \frac{\partial^2 u_{\bar{\alpha}}}{\partial x \partial t }(\ell_{\bar{\alpha}},t), 
\quad t>0, \, \bar{\alpha} \in I_{{\mathcal M}}, \ee 
\be 
\label{ch8eq38}
u_{\bar{\alpha}}(x, 0) = u^0_{\bar{\alpha}}(x), \, \frac{\partial u_{\bar{\alpha}}}{\partial t} (x, 0) = u^1_{\bar{\alpha}}(x), \quad
0<x<\ell_{\bar{\alpha}}, \, {\bar{\alpha}} \in I, 
\ee 
where
$u_{\bar{\alpha}} : [0,\ell_{\bar{\alpha}}] \times (0,+ \infty)
\rightarrow \rline, \, \bar{\alpha} \in I,$ be the transverse displacement with index $\bar{\alpha}$, $a_{\bar{\alpha }}\in
L^{\infty }(0,\ell _{\bar{\alpha }})$ and, either $a_{\bar{\alpha }}$ is zero, 
that is, $e_{\bar{\alpha }}$ is a purely elastic edge, or $a_{\bar{\alpha }}(x) > 0$,  on $(0,\ell _{\bar{\alpha }})$. Such edge will be called a K-V edge. This setting also includes the case that $a_{\bar{\alpha }}(x) > 0$ only on subintervals of $(0,\ell _{\bar{\alpha }})$, since we then can consider this edge as the union of pure elastic edges and K-V edges.

We assume that $\mathcal{T}$ contains at least one K-V edge. Furthermore, we
suppose that every maximal subgraph of purely elastic edges is a tree, whose
leaves are attached to K-V edges.

Models of the transient behavior of some or all of the state variables describing the motion of flexible structures have been of great interest
in recent years, for details about physical motivation for the models, see \cite{dagerzuazua}, \cite{lagnese} and the references therein. Mathematical
analysis of transmission partial differential equations is detailed in \cite{lagnese}. For the feedback stabilization problem the wave or Schr\"odinger equations in networks, we refer
the readers to references \cite{ammari1}-\cite{amjellk}, \cite{lagnese}.  

Our aim is to prove, under some assumptions on damping coefficients $a_{\bar{\alpha}}$, $\bar{\alpha} \in I$, exponential and polynomial stability results for the system \rfb{ch8eq18}-\rfb{ch8eq38}.

We define the natural energy $E(t)$ of a solution $\underline{u} = (u_{\bar{\alpha}})_{\bar{\alpha} \in I}$ of \rfb{ch8eq18}-\rfb{ch8eq38} by
\be 
\label{energy1}
E(t)=\frac{1}{2} \ds \sum_{\bar{\alpha} \in I} \int_0^{\ell_{\bar{\alpha}}} 
\left( \left|\frac{\partial u_{\bar{\alpha}}}{\partial t}(x,t)\right|^2 + \left|\frac{\partial u_{\bar{\alpha}}}{\partial x}(x,t)\right|^2 \right) \, dx.
\ee 

It is straightforward to check that every sufficiently smooth solution of \rfb{ch8eq18}-\rfb{ch8eq38} satisfies the following dissipation law 
\begin{equation}\label{dissipae1}
\frac{d}{dt} E(t) = - \ds \sum_{\bar{\alpha} \in I} \int_0^{\ell_{\bar{\alpha}}} a_{\bar{\alpha}}(x) \, \left|\frac{\partial^2 u_{\bar{\alpha}}}{\partial x \partial t}(x,t)\right|^2 \, dx \leq 0, 
\end{equation}
and therefore, the energy is a nonincreasing function of the time variable $t$.

\medskip
 
The main results of this paper then concern the precise asymptotic behavior of the solutions of \rfb{ch8eq18}-\rfb{ch8eq38}. 
Our technique is a special frequency domain analysis of the corresponding operator.

\medskip

This paper is organized as follows:
In Section \ref{wellposed}, we give the proper functional setting for system \rfb{ch8eq18}-\rfb{ch8eq38} and prove that the system is well-posed.
In Section \ref{specanal}, we analyze the resolvent of the wave operator associated to the dissipative system \rfb{ch8eq18}-\rfb{ch8eq38} and prove the asymptotic behavior of the corresponding semigroup. In the last section we give some comments on the cases of more general graph for the network. 

\section{Well-posedness of the system} \label{wellposed}
In order to study system \rfb{ch8eq18}-\rfb{ch8eq38} we need a proper functional setting. We define the following space 
$$
{\mathcal H} = V \times H
$$
where
$H = \ds \prod_{\bar{\alpha} \in I} L^2 (0,\ell_{\bar{\alpha}})$ and 
$V = \left\{\underline{u} \in \ds \prod_{\bar{\alpha} \in I} H^1(0,\ell_{\bar{\alpha}} ) \, :   u(0) = 0, \, u_{\bar{\alpha}}(\ell_{\bar{\alpha}}) = 0, \, \bar{\alpha} \in I_{\mathcal S}, \; \hbox{satisfies} \; \rfb{e5} \ \right\}$ 
\be
\label{e5}
u_{\bar{\alpha} \circ \beta}(0)=
u_{\bar{\alpha}}(\ell_{\bar{\alpha}}), \, \beta =
1,..., m_{\bar{\alpha}}, \, \bar{\alpha} \in I_{{\mathcal M}}, 
\ee
and equipped with the inner products
\be
\label{ipVb}
<\underline{u},\underline{\tilde{u}}>_{{\mathcal H}}= \ds \sum_{\bar{\alpha} \in I} \int_0^{\ell_{\bar{\alpha}}}   \left( u_{\bar{\alpha}}(x) \, \bar{\tilde{u}}_{\bar{\alpha}}(x) + u^\prime_{\bar{\alpha}}(x) \,  \bar{ \tilde{u}}^\prime{\bar{\alpha}}(x)\right)\,  dx. 
\ee
System \rfb{ch8eq18}-\rfb{ch8eq38} can be rewritten as the first order evolution equation
\begin{equation} \left\{
\begin{array}{l}
\ds \frac{\partial }{\partial t} \left( \begin{array}{c} \underline{u} \cr \frac{\partial \underline{u}}{\partial t}
 \end{array} \right) =\mathcal{A}_d 

\left( \begin{array}{c} \underline{u} \cr \frac{\partial \underline{u}}{\partial t}
 \end{array} \right),\\
\underline{u}(0)= \underline{u}^{0}, \, \ds \frac{\partial \underline{u}}{\partial t} = \underline{u}^{1}\end{array}\right.\label{pbfirstorder}
\end{equation}
where the operator $\mathcal{A}_d : {\mathcal D}({\mathcal A}_d) \subset {\mathcal H} \rightarrow {\mathcal H}$ is defined by 
$$
\mathcal{A}_d \left( \begin{array}{c} \underline{u} \\ \underline{v} \end{array} \right):= \left( \begin{array}{c}  \underline{v}  \\ ( \underline{u^{\prime}} + \underline{a} * \underline{v^{\prime}} )^\prime  \end{array} \right),
$$
with
$$
\underline{a} := (a_{\bar{\alpha}})_{\bar{\alpha} \in I} \; \;\text{and}\; \;
\underline{a} *  \underline{v^{\prime}} := (a_{\bar{\alpha}}  
v^{\prime}_{\bar{\alpha}})_{\bar{\alpha} \in I},
$$
and
$$
{\mathcal D}({\mathcal A}_d):= \left\{(\underline{u}, \underline{v}) \in \mathcal{H}, \, \underline{v} \in  V, ( \underline{u^{\prime}} + \underline{a} * \underline{v^{\prime}} ) \in \ds \prod_{\bar{\alpha} \in I} H^1(0,\ell_{\bar{\alpha}})\,: (\underline{u}, \underline{v}) \, 
\mbox {\textrm{satisfies}} \, (\ref {e6})
\right\},
$$

\begin{equation}\label{e6}
\ds \sum_{\beta = 1}^{m_{\bar{\alpha}}}
 u_{\bar{\alpha} \circ \beta}(0)+ a_{\bar{\alpha} \circ \beta}(0) v^{\prime}_{\bar{\alpha} \circ \beta}(0)  =
 u^{\prime}_{\bar{\alpha}}(\ell_{\bar{\alpha}})+ a_{\bar{\alpha}}(\ell _{\bar{\alpha}} ) v^{\prime}_{\bar{\alpha}}(\ell _{\bar{\alpha}}),
\quad \, \bar{\alpha} \in I_{{\mathcal M}}.
\end{equation}

\begin{lemma} \label{injlem}
The operator $\mathcal{A}_{d}$ is dissipative, $\left\{0,1 \right\} \subset \rho (\mathcal{A}_{d}):$
the resolvent set of $\mathcal{A}_d.$ 
\end{lemma}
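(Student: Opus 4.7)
The plan is to handle the two claims separately. Dissipativity will follow from a single integration-by-parts argument once the boundary terms are organized vertex by vertex; the resolvent conditions $0\in\rho(\mathcal{A}_d)$ and $1\in\rho(\mathcal{A}_d)$ will be reduced to elliptic problems on $V$ and solved by Lax--Milgram.

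For dissipativity, I fix $U=(\underline u,\underline v)\in\mathcal{D}(\mathcal{A}_d)$ and compute $\langle \mathcal{A}_d U,U\rangle_{\mathcal{H}}$ with the natural energy inner product $\sum_{\bar\alpha}\int u'_{\bar\alpha}\bar{\tilde u}'_{\bar\alpha}\,dx+\sum_{\bar\alpha}\int v_{\bar\alpha}\bar{\tilde v}_{\bar\alpha}\,dx$. Integrating by parts on each edge gives
\begin{equation*}
\langle \mathcal{A}_d U,U\rangle_{\mathcal{H}}=\sum_{\bar\alpha\in I}\!\int_0^{\ell_{\bar\alpha}}\!\bigl[v'_{\bar\alpha}\bar u'_{\bar\alpha}-u'_{\bar\alpha}\bar v'_{\bar\alpha}-a_{\bar\alpha}|v'_{\bar\alpha}|^2\bigr]\,dx+\sum_{\bar\alpha\in I}\!\Bigl[(u'_{\bar\alpha}+a_{\bar\alpha}v'_{\bar\alpha})\bar v_{\bar\alpha}\Bigr]_0^{\ell_{\bar\alpha}}.
\end{equation*}
The first two pieces of the integrand are complex conjugates, so their real part cancels and the volume contribution reduces to $-\sum_{\bar\alpha}\int a_{\bar\alpha}|v'_{\bar\alpha}|^2\,dx$. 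I then regroup the boundary sum by vertex: at $\mathcal R$ and at every exterior vertex, $v$ vanishes by membership in $V$; at each interior vertex $\mathcal{O}_{\bar\alpha}$, the continuity relation (\ref{e5}) lets me factor out the common value $\bar v_{\bar\alpha}(\ell_{\bar\alpha})$, and the flux compatibility (\ref{e6}) built into $\mathcal{D}(\mathcal{A}_d)$ annihilates what remains. Hence $\operatorname{Re}\langle \mathcal{A}_d U,U\rangle_{\mathcal{H}}=-\sum_{\bar\alpha}\int a_{\bar\alpha}|v'_{\bar\alpha}|^2\,dx\le 0$.

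For $0\in\rho(\mathcal{A}_d)$, given $(\underline f,\underline g)\in\mathcal{H}$ the equation $\mathcal{A}_d(\underline u,\underline v)=(\underline f,\underline g)$ forces $\underline v=\underline f\in V$ and $(u'_{\bar\alpha}+a_{\bar\alpha}f'_{\bar\alpha})'=g_{\bar\alpha}$. I cast this as: find $\underline u\in V$ such that
\begin{equation*}
\sum_{\bar\alpha}\int_0^{\ell_{\bar\alpha}}\!u'_{\bar\alpha}\bar w'_{\bar\alpha}\,dx=-\sum_{\bar\alpha}\int_0^{\ell_{\bar\alpha}}\!a_{\bar\alpha}f'_{\bar\alpha}\bar w'_{\bar\alpha}\,dx-\sum_{\bar\alpha}\int_0^{\ell_{\bar\alpha}}\!g_{\bar\alpha}\bar w_{\bar\alpha}\,dx\qquad\forall\,\underline w\in V.
\end{equation*}
The left-hand side is the $V$-inner product (equivalent to the $H^1$ one via Poincar\'e, which applies because $u_e(0)=0$ for elements of $V$), while the right-hand side defines a continuous antilinear form on $V$ since $\underline a\in L^\infty$; Lax--Milgram thus produces a unique solution $\underline u\in V$. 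Taking test functions compactly supported in a single edge promotes the identity to the PDE in $L^2$, which in turn shows $u'_{\bar\alpha}+a_{\bar\alpha}f'_{\bar\alpha}\in H^1(0,\ell_{\bar\alpha})$, and test functions nonzero at interior vertices recover (\ref{e6}). The case $\lambda=1$ is handled identically: substituting $\underline v=\underline u-\underline f$ from the first equation reduces the system to a scalar equation on $\underline u$ whose weak form carries the manifestly coercive bilinear form $\sum_{\bar\alpha}\int\bigl[u_{\bar\alpha}\bar w_{\bar\alpha}+(1+a_{\bar\alpha})u'_{\bar\alpha}\bar w'_{\bar\alpha}\bigr]\,dx$.

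The only genuine difficulty is the vertex bookkeeping in the dissipation calculation: the boundary sum from edge-by-edge integration by parts has to be reindexed by vertex before (\ref{e5}) and (\ref{e6}) can be jointly used to close the cancellation. Once this pattern is in place, coercivity, continuity of the linear forms, and the promotion of weak solutions to members of $\mathcal{D}(\mathcal{A}_d)$ via local test functions are routine.
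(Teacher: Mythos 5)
Your proposal is correct and follows essentially the same route as the paper: dissipativity via edge-by-edge integration by parts with the boundary terms cancelled through the continuity condition (\ref{e5}) and the flux condition (\ref{e6}), and the resolvent points $\lambda=0,1$ via Lax--Milgram applied to the coercive forms $\sum_{\bar\alpha}\int u'_{\bar\alpha}\bar w'_{\bar\alpha}\,dx$ and $\sum_{\bar\alpha}\int\bigl(u_{\bar\alpha}\bar w_{\bar\alpha}+(1+a_{\bar\alpha})u'_{\bar\alpha}\bar w'_{\bar\alpha}\bigr)\,dx$ on $V$. You in fact supply slightly more detail than the paper (which dispatches $0\in\rho(\mathcal{A}_d)$ with ``by the same way'') by making explicit the Poincar\'e argument and the recovery of membership in $\mathcal{D}(\mathcal{A}_d)$ through local test functions.
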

\begin{proof}
For $(\underline{u},\underline{v})\in \mathcal{D}(\mathcal{A}_{d}),$ we have
\begin{equation*}
Re(\left\langle \mathcal{A}_{d}(\underline{u},\underline{v}),(\underline{u},%
\underline{v})\right\rangle _{\mathcal{H}})=Re\sum_{\bar{\alpha }\in
I}\left( \int_{0}^{\ell _{\bar{\alpha }}}v^{\prime}_{\bar{%
\alpha }}\overline{u^{\prime}_{\bar{\alpha }}}dx+\int_{0}^{\ell
_{\bar{\alpha }}}(u^{\prime}_{\bar{\alpha }}+a_{%
\bar{\alpha }}v^{\prime}_{\bar{\alpha }})^{\prime}\overline{v_{%
\bar{\alpha }}}dx\right). 
\end{equation*}

Performing integration by parts and using transmission and boundary
conditions, a straightforward calculations leads to 
\begin{equation*}
Re(\left\langle \mathcal{A}_{d}(\underline{u},\underline{v}),(\underline{u},%
\underline{v})\right\rangle _{\mathcal{H}})=-\sum_{\bar{\alpha }\in
I}\int_{0}^{\ell _{\bar{\alpha }}}a_{\bar{\alpha }}(x)\left|
v^{\prime}_{\bar{\alpha }}(x)\right| ^{2}dx\leq 0
\end{equation*}
which proves the dissipativeness of the operator $\mathcal{A}_{d}$ in $%
\mathcal{H}.$

Next, using Lax-Milgram's lemma, we prove that $1 \in \rho (\mathcal{A}_{d}).$ For
this, let $(f,g)\in \mathcal{H}$ and we look for $(\underline{u},\underline{v})\in
D(A_{d})$ such that
\begin{equation*}
(\mathcal{I}-\mathcal{A}_{d})(\underline{u},\underline{v})=(\underline{f},%
\underline{g})
\end{equation*}
which can be written as
\begin{eqnarray}
u_{\bar{\alpha }}-v_{\bar{\alpha }} &=&f_{\bar{\alpha }},\;%
\bar{\alpha }\in I,  \label{Lax1} \\
v_{\bar{\alpha }}-(u^{\prime}_{\bar{\alpha }}+a_{%
\bar{\alpha }}v^{\prime}_{\bar{\alpha }})^{\prime} &=&g_{\bar{\alpha }},\;\bar{\alpha }\in I.  \label{Lax2}
\end{eqnarray}
Let $\underline{w} \in V$; multiplying (\ref{Lax2}) by $w_{\bar{\alpha }}$, then summing over $%
\bar{\alpha }\in I$, we obtain
$$
\sum_{\bar{\alpha }\in I}\int_{0}^{\ell _{\bar{\alpha }}}v_{%
\bar{\alpha }}\overline{w_{\bar{\alpha }}}dx-\sum_{\bar{%
\alpha }\in I}\left[ \left( u^{\prime}_{\bar{\alpha }}+a_{\bar{%
\alpha }}v^{\prime}_{\bar{\alpha }}\right) (x)\overline{w_{%
\bar{\alpha }}}(x)\right] _{0}^{\ell _{\bar{\alpha }}}+ 
$$
\begin{equation}
\sum_{%
\bar{\alpha }\in I}\int_{0}^{\ell _{\bar{\alpha }}}\left( u^{\prime}_{\bar{\alpha }}+a_{\bar{\alpha }}v^{\prime}_{\bar{%
\alpha }}\right) \overline{w^{\prime}_{\bar{\alpha }}}dx   
=\sum_{\bar{\alpha }\in I}\int_{0}^{\ell _{\bar{\alpha }}}g_{\bar{\alpha }}\overline{w_{\bar{\alpha }}}
dx. 
\label{Lax3}
\end{equation}
Replacing $v_{\bar{\alpha }}$ in the last equality by (\ref{Lax1}), we get
\begin{equation}
\varphi (\underline{u},\underline{w})=\psi (\underline{w})  \label{Lax4}
\end{equation}
where
\begin{equation*}
\varphi (\underline{u},\underline{w})=\sum_{\bar{\alpha }\in I}\left(
\int_{0}^{\ell _{\bar{\alpha }}}u_{\bar{\alpha }}\overline{w_{%
\bar{\alpha }}}dx+\int_{0}^{\ell _{\bar{\alpha }}}(1+a_{\bar{%
\alpha }})u^{\prime}_{\bar{\alpha }}\overline{w^{\prime}_{%
\bar{\alpha }}}\right) 
\end{equation*}
and
\begin{equation*}
\psi (\underline{w})=\sum_{\bar{\alpha }\in I}\left( \int_{0}^{\ell _{%
\bar{\alpha }}} \left( f_{\bar{\alpha }} + g_{\bar{\alpha }} \right) \, \overline{%
w_{\bar{\alpha }}}dx+\int_{0}^{\ell _{\bar{\alpha }}}a_{\bar{%
\alpha }}f^{\prime}_{\bar{\alpha }}\overline{w^{\prime}_{%
\bar{\alpha }}}dx\right).
\end{equation*}
The fuction $\varphi $ is a continuous sesquilinear form on $V\times V$ and $%
\psi $ is a continuous anti-linear form on $V;$ here $V$ is equipped with
the inner product
\begin{equation*}
\left\langle \underline{f},\underline{g}\right\rangle =\sum_{\bar{%
\alpha }\in I}\left( \int_{0}^{\ell _{\bar{\alpha }}}u_{\bar{%
\alpha }}\overline{w_{\bar{\alpha }}}dx+\int_{0}^{\ell _{\bar{%
\alpha }}}u^{\prime}_{\bar{\alpha }}\overline{w^{\prime}_{%
\bar{\alpha }}}\right) .
\end{equation*}

Since $\varphi$ is coercive on $V,$ the conclusion is deduced by the
Lax-Milgram lemma$.$

\medskip
By the same why we prove that $0 \in \rho(\mathcal{A}_d)$.

\end{proof}

By the Lumer-Phillip's theorem (see \cite{Pazy, tucsnakbook}), we have the following proposition.

\begin{proposition}\label{3exist1} 
The operator $\mathcal{A}_{d}$ generates a $\mathcal{C} _{0}$-semigroup of contraction $(S_{d}(t))_{t \geq0}$ on the Hilbert space $\mathcal{H}$.

Hence, for an initial datum $(\underline{u}^{0},\underline{u}^{1}) \in {\mathcal H}$, there exists a unique solution $\left(\underline{u}, 
\frac{\partial \underline{u}}{\partial t} \right)\in C([0,\,+\infty),\, {\mathcal H})$
to  problem (\ref{pbfirstorder}). Moreover, if $(\underline{u}^{0},\underline{u}^{1}) \in \mathcal{D}(\mathcal{A}_d)$, then
$$\left(\underline{u},\frac{\partial \underline{u}}{\partial t} \right) \in C([0,\,+\infty),\, \mathcal{D}(\mathcal{A}_d)).$$ 
\end{proposition}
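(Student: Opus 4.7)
The plan is to apply the Lumer–Phillips theorem directly, since Lemma \ref{injlem} has already done essentially all of the analytic work. I would organize the argument as follows.

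First, I would collect the three hypotheses Lumer–Phillips requires for $\mathcal{A}_d$ to generate a $C_0$-semigroup of contractions on $\mathcal{H}$: dense domain, dissipativity, and surjectivity of $\lambda I-\mathcal{A}_d$ for some $\lambda>0$. Dissipativity is exactly the first half of Lemma \ref{injlem}, and the surjectivity of $I-\mathcal{A}_d$ (indeed, an isomorphism onto $\mathcal{H}$) is precisely the content of $1\in\rho(\mathcal{A}_d)$ proved there via Lax–Milgram. So the only missing verification is density of $\mathcal{D}(\mathcal{A}_d)$ in $\mathcal{H}=V\times H$.

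For density I would argue as follows. Given $(\underline{f},\underline{g})\in\mathcal{H}$ and $\varepsilon>0$, I want $(\underline{u},\underline{v})\in\mathcal{D}(\mathcal{A}_d)$ with $\|(\underline{u},\underline{v})-(\underline{f},\underline{g})\|_{\mathcal H}<\varepsilon$. Since on each edge $e_{\bar\alpha}$ the space $C_c^\infty(0,\ell_{\bar\alpha})$ is dense in $L^2(0,\ell_{\bar\alpha})$, I can approximate $\underline{g}$ in $H$ by a $\underline{v}$ that vanishes near every vertex; such $\underline{v}$ trivially lies in $V$ and makes all the transmission terms in \rfb{e6} involving $v^\prime$ vanish. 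For the first component, $V$ is by construction dense in itself, and the compatibility condition \rfb{e6} on $\underline{u}$ reduces, once $\underline{v}$ vanishes near the vertices, to a standard Kirchhoff-type identity on the derivatives of $\underline{u}$; this can be arranged on a dense subset of $V$ (for example, by taking $\underline{u}$ smooth on each edge, equal to $\underline{f}$ away from small neighborhoods of the vertices, and modifying it on these neighborhoods to realize \rfb{e6}, at the cost of arbitrarily small $V$-error). This step is the main obstacle, because one must simultaneously respect the continuity relations \rfb{e5} at the interior vertices and the Kirchhoff balance \rfb{e6}, while keeping the perturbation small; however, since only finitely many vertices are involved and the modifications can be localized to shrinking neighborhoods, the construction goes through.

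With density in hand, Lumer–Phillips (as stated, e.g., in \cite{Pazy, tucsnakbook}) yields that $\mathcal{A}_d$ generates a contraction $C_0$-semigroup $(S_d(t))_{t\geq 0}$ on $\mathcal{H}$. The two regularity assertions of the proposition are then standard consequences of semigroup theory: for $(\underline{u}^0,\underline{u}^1)\in\mathcal{H}$ the mild solution
\[
\left(\underline{u}(t),\tfrac{\partial\underline{u}}{\partial t}(t)\right)=S_d(t)(\underline{u}^0,\underline{u}^1)
\]
belongs to $C([0,\infty),\mathcal{H})$, while for initial data in $\mathcal{D}(\mathcal{A}_d)$ the orbit stays in $\mathcal{D}(\mathcal{A}_d)$ and is continuous there, giving a classical solution of \rfb{pbfirstorder}. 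No further calculation is needed beyond invoking these general results.
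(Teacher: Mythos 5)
Your proposal follows essentially the same route as the paper, which simply invokes the Lumer--Phillips theorem on the strength of Lemma \ref{injlem} (dissipativity plus $1\in\rho(\mathcal{A}_d)$). The one place you add material is the explicit density argument, and there you should be aware of two things. First, it is unnecessary: for a dissipative operator $A$ on a reflexive Banach space (in particular a Hilbert space) with $\mathrm{ran}(I-A)$ equal to the whole space, $\mathcal{D}(A)$ is automatically dense (see Pazy, Ch.~1, Thm.~4.6), so density comes for free from Lemma \ref{injlem}. Second, as written your construction has a gap: membership in $\mathcal{D}(\mathcal{A}_d)$ requires $\underline{u}^{\prime}+\underline{a}*\underline{v}^{\prime}\in\prod_{\bar{\alpha}\in I}H^1(0,\ell_{\bar{\alpha}})$, and since $a_{\bar{\alpha}}$ is only assumed to be in $L^{\infty}$, choosing $\underline{v}$ smooth with compact support in each edge does not make $a_{\bar{\alpha}}v_{\bar{\alpha}}^{\prime}$ an $H^1$ function; so smoothing $\underline{u}$ and fixing the Kirchhoff balance \rfb{e6} is not enough, and the approximating pair you build need not lie in the domain. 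The cleanest repair is simply to drop the hand-made density argument and cite the automatic-density result.
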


Furthermore, the solution $(\underline{u},\frac{\partial \underline{u}}{\partial t})$ of \rfb{ch8eq18}-\rfb{ch8eq38} with initial datum in $\mathcal{D}(\mathcal{A}_d)$ satisfies \rfb{dissipae1}.
Therefore the energy is decreasing.

\section{Asymptotic behaviour} \label{specanal} 

In order to analyze the asymptotic behavior of system \rfb{ch8eq18}-\rfb{ch8eq38}, we shall use the
following characterizations for exponential and polynomial stability of a $%
\mathcal{C}_{0}$-semigroup of contractions:

\begin{lemma} \cite{huang, pruss}
\label{lem31} A $\mathcal{C}_{0}$-semigroup of contractions $(e^{t\mathcal{A}})_{t \geq 0}$ defined
on the Hilbert space $\mathcal{H}$ is exponentially stable if and only if 
\begin{equation}
\mathbf{i}\mathbb{R}\subset \rho (\mathcal{A})  \label{st1}
\end{equation}
and 
\begin{equation}
\limsup_{\left| \beta \right| \rightarrow +\infty} \left\| (\mathbf{i}%
\beta \mathcal{I}-\mathcal{A})^{-1}\right\| _{\mathcal{L}(\mathcal{H}%
)}<\infty  \label{st2}
\end{equation}
\end{lemma}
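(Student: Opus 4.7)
The statement is the Huang--Prüss characterization of exponential stability for contraction semigroups on a Hilbert space; the necessity direction is immediate and the sufficiency direction is the classical Plancherel argument, relying crucially on $\mathcal{H}$ being Hilbert. For necessity, if $\|e^{t\mathcal{A}}\| \leq Me^{-\omega t}$ with $\omega > 0$, the Laplace representation $(\lambda\mathcal{I} - \mathcal{A})^{-1} = \int_0^\infty e^{-\lambda t} e^{t\mathcal{A}}\,dt$ converges for $\mathrm{Re}\,\lambda > -\omega$, giving immediately $\mathbf{i}\mathbb{R} \subset \rho(\mathcal{A})$ and $\|(\mathbf{i}\beta\mathcal{I} - \mathcal{A})^{-1}\| \leq M/\omega$ for every $\beta \in \mathbb{R}$.

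For sufficiency, set $M_0 := \sup_{\beta\in\mathbb{R}} \|(\mathbf{i}\beta\mathcal{I} - \mathcal{A})^{-1}\| < \infty$. The plan has three steps. First, using the factorization
\[
\lambda\mathcal{I} - \mathcal{A} = (\mathbf{i}\beta\mathcal{I} - \mathcal{A})\bigl(\mathcal{I} + \sigma(\mathbf{i}\beta\mathcal{I} - \mathcal{A})^{-1}\bigr)
\]
and a Neumann series, I would extend the resolvent analytically to the strip $|\mathrm{Re}\,\lambda| < 1/(2M_0)$ with $\|(\lambda\mathcal{I} - \mathcal{A})^{-1}\| \leq 2M_0$ throughout. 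Second, for every $\sigma > 0$ the map $t \mapsto e^{-\sigma t} e^{t\mathcal{A}}x$ lies in $L^2(\mathbb{R}_+; \mathcal{H})$ (since $e^{t\mathcal{A}}$ is a contraction) with Fourier--Laplace transform $\beta \mapsto ((\sigma + \mathbf{i}\beta)\mathcal{I} - \mathcal{A})^{-1} x$, so Plancherel's theorem -- valid precisely because $\mathcal{H}$ is Hilbert -- yields
\[
\int_0^\infty e^{-2\sigma t} \|e^{t\mathcal{A}} x\|^2 \, dt = \frac{1}{2\pi}\int_{\mathbb{R}} \|((\sigma + \mathbf{i}\beta)\mathcal{I} - \mathcal{A})^{-1} x\|^2 \, d\beta.
\]
I would then show that the right-hand side stays uniformly bounded as $\sigma \downarrow 0$, and pass to the limit by monotone convergence on the left to get $\int_0^\infty \|e^{t\mathcal{A}} x\|^2 \, dt \leq C\|x\|^2$ for every $x \in \mathcal{H}$. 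Third, I would close via Datko: contractivity gives
\[
t \|e^{t\mathcal{A}} x\|^2 = \int_0^t \|e^{(t-s)\mathcal{A}} e^{s\mathcal{A}} x\|^2 \, ds \leq \int_0^t \|e^{s\mathcal{A}} x\|^2 \, ds \leq C\|x\|^2,
\]
so $\|e^{t\mathcal{A}}\| \leq \sqrt{C/t}$ drops below $1$ for $t$ large, which by the spectral radius formula forces exponential decay.

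The main obstacle is the uniformity of the Plancherel right-hand side as $\sigma \downarrow 0$: the pointwise strip bound $2M_0$ alone does not yield $L^2(\mathbb{R})$-integrability of the integrand. The remedy applies the resolvent identity between levels $\sigma$ and $\tilde{\sigma}$, which together with the $2M_0$ bound gives $\|((\sigma + \mathbf{i}\beta)\mathcal{I} - \mathcal{A})^{-1}x\| \leq 2 \|((\tilde{\sigma} + \mathbf{i}\beta)\mathcal{I} - \mathcal{A})^{-1}x\|$ for $\tilde{\sigma} - \sigma$ sufficiently small; iterating this transfer from some level $\tilde{\sigma} > 0$ (where integrability is automatic) down to $\sigma = 0$ closes the argument. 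It is precisely this Hilbert-space-specific mechanism that has no counterpart in general Banach spaces, which is why the Huang--Prüss characterization fails outside the Hilbert setting.
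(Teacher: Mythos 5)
The paper does not prove this lemma at all: it is quoted as a known result with citations to Huang and Pr\"uss, so there is no in-paper argument to compare against. Your proposal is a correct and essentially complete rendition of the standard proof of the Gearhart--Pr\"uss--Huang theorem. The necessity direction via the Laplace representation of the resolvent is fine. For sufficiency, each of your three steps is sound: the Neumann-series extension gives $\|(\lambda\mathcal{I}-\mathcal{A})^{-1}\|\leq 2M_0$ on the closed right half-plane (using $\|(\lambda\mathcal{I}-\mathcal{A})^{-1}\|\leq 1/\mathrm{Re}\,\lambda$ away from the axis and the Neumann bound near it), the vector-valued Plancherel identity is exactly where the Hilbert-space hypothesis enters, and your resolvent-identity transfer $\|R(\sigma+\mathbf{i}\beta)x\|\leq 2\|R(\tilde{\sigma}+\mathbf{i}\beta)x\|$ is legitimate because the factor $|\tilde{\sigma}-\sigma|\,\|R(\sigma+\mathbf{i}\beta)\|\leq |\tilde{\sigma}-\sigma|\cdot 2M_0$ can be kept below $1$ with a number of descent steps from $\tilde{\sigma}=1$ that is uniform in $\beta$ and in the target level $\sigma\in(0,1]$; monotone convergence then yields $\int_0^\infty\|e^{t\mathcal{A}}x\|^2\,dt\leq C\|x\|^2$ and Datko's argument closes the proof. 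Two cosmetic remarks: your final phrase ``iterating down to $\sigma=0$'' should read ``uniformly over $\sigma\in(0,1]$,'' since the limit is taken only on the left-hand side of the Plancherel identity; and the contraction hypothesis is not needed for the theorem (it holds for arbitrary bounded $\mathcal{C}_0$-semigroups), though it does let you take $\tilde{\sigma}=1$ with an explicit $L^2$ bound and makes the Datko step one line, which is all the paper requires.
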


\begin{lemma} \cite{BT}
\label{lem32} A $\mathcal{C}_{0}$-semigroup of contractions $(e^{t\mathcal{A}})_{t \geq 0}$ on the
Hilbert space $\mathcal{H}$ satisfies 
\begin{equation*}
\left\| e^{t\mathcal{A}}y_{0}\right\| \leq \frac{C}{t^{\frac{1}{\alpha }}}%
\left\| y_{0}\right\| _{\mathcal{D}(\mathcal{A})}
\end{equation*}
for some constant $C>0$ and for $\alpha >0$ if and only if (\ref{st1})
holds and 
\begin{equation}
\limsup_{\left| \beta \right| \rightarrow +\infty} \frac{1}{%
\beta ^{\alpha }}\left\| (\mathbf{i}\beta \mathcal{I}-\mathcal{A}%
)^{-1}\right\| _{\mathcal{L}(\mathcal{H})}<\infty .  \label{st3}
\end{equation}
\end{lemma}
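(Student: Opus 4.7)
The plan is to prove the two implications of the lemma separately. The implication from polynomial decay to the resolvent bound is essentially a Laplace-transform computation and holds for bounded semigroups on any Banach space, while the reverse implication is the substantive content, where the Hilbert space structure of $\mathcal{H}$ is used in an essential way.

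For the easier direction, assuming $\|e^{t\mathcal{A}}y_0\| \le Ct^{-1/\alpha}\|y_0\|_{\mathcal{D}(\mathcal{A})}$ for all $y_0\in\mathcal{D}(\mathcal{A})$, I would use the fact that for any $y \in \mathcal{H}$ and $\beta \in \mathbb{R}$,
\begin{equation*}
(\mathbf{i}\beta\mathcal{I} - \mathcal{A})^{-1} y = -\lim_{\varepsilon \to 0^+}\int_0^\infty e^{-(\varepsilon + \mathbf{i}\beta)t} e^{t\mathcal{A}} y\, dt,
\end{equation*}
valid since $\mathbf{i}\mathbb{R}\subset\rho(\mathcal{A})$ and the semigroup is contractive. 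Applied to $y = \mathcal{A} z$ for $z \in \mathcal{D}(\mathcal{A}^2)$ and combined with an integration by parts in $t$, the decay rate $t^{-1/\alpha}$ is converted into growth of order $|\beta|^\alpha$ in the resolvent, which is exactly (\ref{st3}).

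For the main (non-trivial) direction, I would follow the Borichev--Tomilov strategy. The starting point is a Cauchy contour representation of the form
\begin{equation*}
e^{t\mathcal{A}}\mathcal{A}^{-n}y_0 = \frac{1}{2\pi\mathbf{i}}\int_{\mathbf{i}\mathbb{R}} e^{\lambda t}(\lambda\mathcal{I}-\mathcal{A})^{-1}\mathcal{A}^{-n}y_0\, d\lambda,
\end{equation*}
where $n$ is chosen large enough so that the integrand is absolutely integrable under the polynomial resolvent bound (\ref{st3}). One then splits the integral at $|\beta|=r(t)$ into a low-frequency part estimated crudely and a high-frequency tail estimated via (\ref{st3}), and optimises the cut-off $r(t)$ as a suitable power of $t$. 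The Hilbert space structure enters through Plancherel's theorem, used to pass from pointwise frequency estimates on $(\mathbf{i}\beta\mathcal{I}-\mathcal{A})^{-1}$ to operator-norm estimates on the time-domain integral.

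The main obstacle is the iteration needed to reduce the smoothed decay bound for $e^{t\mathcal{A}}\mathcal{A}^{-n}$ down to the claimed bound for $e^{t\mathcal{A}}\mathcal{A}^{-1}$ while preserving the sharp exponent $1/\alpha$; a direct contour estimate for $e^{t\mathcal{A}}\mathcal{A}^{-1}$ diverges as soon as $\alpha\ge 1$. The way around is a moment-interpolation argument between the graph norm of a power of $\mathcal{A}$ and the ambient norm on $\mathcal{H}$, exploiting the Hilbert space geometry and the boundedness of the semigroup. Tracking the exponents carefully through this interpolation, and verifying that the resulting rate is exactly $t^{-1/\alpha}$ rather than something worse, is the delicate heart of the argument; the converse statement in Lemma \ref{lem31} (the Gearhart--Huang--Pr\"uss theorem) emerges along the way as the degenerate case $\alpha=0$.
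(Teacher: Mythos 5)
First, a point of order: the paper does not prove this lemma at all --- it is quoted verbatim from Borichev and Tomilov \cite{BT} --- so there is no in-paper argument to compare yours against. Judged on its own terms, your outline gets the architecture right: the implication from decay to the resolvent bound is the easy, Banach-space half via the Laplace transform (modulo a stray minus sign in your resolvent formula, since $(\lambda\mathcal{I}-\mathcal{A})^{-1}=\int_0^\infty e^{-\lambda t}e^{t\mathcal{A}}\,dt$ for $\mathrm{Re}\,\lambda>0$); the Hilbert structure is needed only for the converse; and the passage from a bound on $e^{t\mathcal{A}}\mathcal{A}^{-n}$ to one on $e^{t\mathcal{A}}\mathcal{A}^{-1}$ is indeed handled by the moment inequality $\|\mathcal{A}^{-1}y\|\le C\|\mathcal{A}^{-n}y\|^{1/n}\|y\|^{1-1/n}$ applied to $y=e^{t\mathcal{A}}x$, which preserves the exponent exactly (though that step uses only sectoriality and works in Banach spaces, not ``Hilbert space geometry'' as you claim).

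The genuine gap is in the central estimate. Splitting the contour integral over $\mathbf{i}\mathbb{R}$ at $|\beta|=r(t)$ cannot produce any decay in $t$: on the imaginary axis $|e^{\mathbf{i}\beta t}|=1$, so both the low- and high-frequency pieces are bounded uniformly in $t$ and there is nothing to optimise over $r(t)$. Some device must convert oscillation into decay. In \cite{BT} this is done by working on the shifted line $\mathrm{Re}\,\lambda=1/t$, writing $t^{k}e^{\lambda t}$ as a $k$-th $\lambda$-derivative of $e^{\lambda t}$ and integrating by parts, which replaces the resolvent by $R(\lambda,\mathcal{A})^{k+1}$; the resulting integral is then controlled by Cauchy--Schwarz together with the Plancherel identity $\int_{\mathbb{R}}\|R(\varepsilon+\mathbf{i}s,\mathcal{A})x\|^{2}\,ds=2\pi\int_0^\infty e^{-2\varepsilon t}\|e^{t\mathcal{A}}x\|^{2}\,dt\le(\pi/\varepsilon)\|x\|^{2}$, and this is precisely where the Hilbert space enters. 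Without such a mechanism your high-frequency estimate fails, and patched-up versions of the naive splitting recover only the lossy $(t/\log t)^{-1/\alpha}$ rate of \cite{lr} and Batty--Duyckaerts rather than the sharp $t^{-1/\alpha}$, which is the whole point of \cite{BT}. Finally, the closing remark that Lemma \ref{lem31} ``emerges as the degenerate case $\alpha=0$'' is not accurate: formally setting $\alpha=0$ yields superpolynomial decay for data in $\mathcal{D}(\mathcal{A})$, not exponential decay of the full semigroup; the Gearhart--Huang--Pr\"uss theorem needs its own proof.
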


\begin{lemma} [Asymptotic stability]
\label{lem33} The operator $\mathcal{A}_{d}$ verifies (\ref{st1}) and then the associated
semigroup $(S_d(t))_{t \geq 0}$ is asymptotically stable on $\mathcal{H}$.
\end{lemma}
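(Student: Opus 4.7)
The plan is to verify condition (\ref{st1}). Since Lemma \ref{injlem} already gives $0\in\rho(\mathcal{A}_d)$, only $\mathbf{i}\beta$ with $\beta\in\mathbb{R}^*$ remains to be checked. Writing the resolvent equation $(\mathbf{i}\beta\mathcal{I}-\mathcal{A}_d)(\underline{u},\underline{v})=(\underline{f},\underline{g})$ in variational form on $V$ after eliminating $\underline{v}=\mathbf{i}\beta\underline{u}-\underline{f}$, the associated sesquilinear form on $V\times V$ is $B(\underline{u},\underline{w})=\sum_{\bar\alpha}\int_{0}^{\ell_{\bar\alpha}}\bigl((1+\mathbf{i}\beta a_{\bar\alpha})u'_{\bar\alpha}\overline{w'_{\bar\alpha}}-\beta^{2}u_{\bar\alpha}\overline{w_{\bar\alpha}}\bigr)dx$, which is a compact perturbation (since $V\hookrightarrow H$ compactly by Rellich's theorem) of a form coercive on $V$ by exactly the argument used in Lemma \ref{injlem}. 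By the Fredholm alternative, surjectivity of $\mathbf{i}\beta\mathcal{I}-\mathcal{A}_d$ is equivalent to injectivity, so the entire task reduces to ruling out the existence of nonzero $(\underline{u},\underline{v})\in\mathcal{D}(\mathcal{A}_d)$ with $\mathcal{A}_d(\underline{u},\underline{v})=\mathbf{i}\beta(\underline{u},\underline{v})$.

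Suppose such an eigenpair exists. Then $\underline{v}=\mathbf{i}\beta\underline{u}$ and $(u'_{\bar\alpha}+a_{\bar\alpha}v'_{\bar\alpha})'=-\beta^{2}u_{\bar\alpha}$ on each edge. Because $\langle\mathcal{A}_d(\underline{u},\underline{v}),(\underline{u},\underline{v})\rangle_{\mathcal{H}}=\mathbf{i}\beta\|(\underline{u},\underline{v})\|^{2}$ is purely imaginary, the dissipation identity established in Lemma \ref{injlem} forces $\sum_{\bar\alpha\in I}\int_{0}^{\ell_{\bar\alpha}}a_{\bar\alpha}|v'_{\bar\alpha}|^{2}\,dx=0$. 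Hence $v'_{\bar\alpha}=0$ on every K-V edge, and from $\underline{v}=\mathbf{i}\beta\underline{u}$ with $\beta\neq 0$ we also get $u'_{\bar\alpha}=0$ there; substituting back into the ODE gives $u_{\bar\alpha}\equiv v_{\bar\alpha}\equiv 0$ on every K-V edge.

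It remains to propagate $u\equiv 0$ across each maximal purely-elastic subgraph $\mathcal{T}_e$, which by hypothesis is a tree all of whose leaves are attached to K-V edges, and on whose edges $u_{\bar\alpha}''+\beta^{2}u_{\bar\alpha}=0$. The plan is an inductive leaf-stripping argument. At any leaf $P$ of $\mathcal{T}_e$, the unique incident elastic edge $e$ of $\mathcal{T}_e$ satisfies $u_{e}(P)=0$ by the continuity relation (\ref{e5}) with the K-V side; moreover the transmission condition (\ref{e6}) at $P$ collapses to $u'_{e}(P)=0$, because the K-V summands contribute $0$ (both $u\equiv 0$ and $v'=0$ on K-V edges) and the single elastic summand carries $a_{e}\equiv 0$. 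The uniqueness theorem for the second-order ODE then gives $u\equiv 0$ on $e$. Removing these leaf edges from $\mathcal{T}_e$, every new leaf $P'$ of the stripped subtree again has a single incident remaining elastic edge $e'$, with $u(P')=0$ (by continuity with the already-stripped neighbors, on which $u\equiv 0$) and $u'_{e'}(P')=0$ (every other summand in (\ref{e6}) has already been shown to vanish). After finitely many iterations $\mathcal{T}_e$ is exhausted, so $u\equiv 0$ throughout the network, forcing $(\underline{u},\underline{v})=0$---a contradiction. The principal obstacle is precisely this propagation: (\ref{e6}) is only a sum constraint on the derivatives, so the induction closes only because the tree structure of $\mathcal{T}_e$ together with the leaves-attached-to-KV hypothesis guarantees that at every stage of the stripping the newly exposed leaf carries a unique remaining elastic edge---allowing the sum condition to isolate that single summand. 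Once (\ref{st1}) is established, asymptotic stability of $(S_d(t))_{t\geq 0}$ follows from the standard Arendt--Batty--Lyubich--V\~u theorem applied to the contraction semigroup generated by $\mathcal{A}_d$.
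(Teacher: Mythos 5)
Your proof is correct and follows essentially the same route as the paper: injectivity is obtained from the dissipation identity on the K-V edges followed by ODE uniqueness and a leaf-to-root propagation through each maximal purely elastic subtree, and surjectivity is reduced to injectivity via compactness of $V\hookrightarrow H$ and the Fredholm alternative, with Arendt--Batty giving the final conclusion. The only cosmetic difference is that on a K-V edge you conclude $u\equiv v\equiv 0$ directly from $a_{\bar\alpha}>0$ on the whole interval, whereas the paper phrases this step as a Cauchy problem for $y=(1+\mathbf{i}\beta a_{\bar\alpha})u'_{\bar\alpha}$ starting from a subinterval where $a_{\bar\alpha}>0$; both are valid under the paper's conventions.
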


\begin{proof}
Since $0\in\rho(\mathcal{A}_d)$ we only need here to prove that $(i\beta \mathcal{I}-\mathcal{A}_d)$ is a one-to-one correspondence in the energy space $\mathcal{H}$ for all $\beta\in\mathbb{R}^{*}$. The proof will be done in two steps: in the first step we will prove the injective property of $(i\beta \mathcal{I}-\mathcal{A}_d)$ and in the second step we will prove the surjective property of the same operator.

\begin{itemize}
\item
Suppose that there exists $\beta \in \mathbb{R}^*$ such that $Ker (\mathbf{i} \beta \mathcal{I} - \mathcal{A}_d)  \neq \left\{0 \right\}$. So
$\lambda =\mathbf{i}\beta$ is an eigenvalue of $\mathcal{A}_{d},$ then let $(%
\underline{u},\underline{v})$ an eigenvector of $\mathcal{D}(\mathcal{A}_{d})$
associated to $\lambda .$ For every $\bar{\alpha }$ in $I$ we have
\begin{eqnarray}
v_{\bar{\alpha }} &=&\mathbf{i}\beta u_{\bar{\alpha }},
\label{st4} \\
(u^{\prime}_{\bar{\alpha }}+a_{\bar{\alpha }%
}v^{\prime}_{\bar{\alpha }})^{\prime} &=&\mathbf{i}\beta v_{\bar{\alpha 
}}.  \label{st5}
\end{eqnarray}
We have

\begin{equation*}
\left\langle \mathcal{A}_{d}(\underline{u},\underline{v}),(\underline{u},%
\underline{v})\right\rangle _{\mathcal{H}}=\sum_{\bar{\alpha }\in
I}\int_{0}^{\ell _{\bar{\alpha }}}a_{\bar{\alpha }}\left| v^{\prime}_{\bar{\alpha }}\right| ^{2}dx=0.
\end{equation*}
Then $a_{\bar{\alpha }}v^{\prime}_{\bar{\alpha }}=0$ a.e. on $%
(0,\ell _{\bar{\alpha }})$.

\medskip

Let $e_{\bar{\alpha }}$ a K-V edge.
According to (\ref{st4}) and the fact that $a_{\bar{\alpha}}v^{\prime}_{\bar{\alpha}}=0$ a.e. on $(0,\ell _{\bar{\alpha}}),$ we
have $u^{\prime}_{\bar{\alpha}}=0$ a.e. on $\omega _{\bar{%
\alpha }}.$ Using (\ref{st5}), we deduce that $v_{\bar{\alpha}}=0$ on $%
\omega _{\bar{\alpha}}.$ Return back to (\ref{st4}), we conclude that $%
u_{\bar{\alpha}}=0$ on $\omega _{\bar{\alpha}}.$

Putting $y=u^{\prime}_{\bar{\alpha}}+a_{\bar{\alpha}%
}v^{\prime}_{\bar{\alpha}}=(1+\mathbf{i}\beta a_{\bar{\alpha}%
})u^{\prime}_{\bar{\alpha}},$ we have $y\in H^{2}(0,\ell _{%
\bar{\alpha}})$ and $y^{\prime}=-\beta ^{2}u_{\bar{\alpha}}.$
Hence $y$ satisfies the Cauchy problem
\begin{equation*}
y^{\prime \prime}+\frac{\beta ^{2}}{1+\mathbf{i}\beta a_{\bar{\alpha}}}y=0,\;\;y(z_{0})=0,%
\;\;y^{\prime}(z_{0})=0
\end{equation*}
for some $z_{0}$ in $\omega _{\bar{\alpha}}.$ Then $y$ is zero on $%
(0,\ell _{\bar{\alpha}})$ and hence $u^{\prime}_{\bar{\alpha}%
}$ and $u_{\bar{\alpha}}$ are zero on $(0,\ell _{\bar{\alpha}})$. Moreover $u_{\bar{\alpha}}$ and $u^{\prime}_{\bar{%
\alpha }}+a_{\bar{\alpha}}v^{\prime}_{\bar{\alpha}}$ vanish
at $0$ and at $\ell _{\bar{\alpha}}$.

If $e_{\bar{\alpha }}$ is a purely elastic edge attached to a K-V edge
at one of its ends, denoted by $x_{\bar{\alpha }},$ then $u_{%
\bar{\alpha }}(x_{\bar{\alpha }})=0,\;u^{\prime}_{\bar{%
\alpha }}(x_{\bar{\alpha }})=0.$ Again, by the same way we can deduce
that $ u^{\prime}_{\bar{\alpha }}$ and $u_{\bar{\alpha }}$ are zero in $%
L^{2}(0,\ell _{\bar{\alpha }})$ and at both ends of $e_{\bar{%
\alpha }}$. We iterate such procedure on every maximal subgraph of purely
elastic edges of $\mathcal{T}$ (from leaves to the root), to obtain finally that $(\underline{u},%
\underline{v})=0$ in $\mathcal{D}(\mathcal{A}_{d}),$ which is in
contradiction with the choice of $(\underline{u},\underline{v}).$

\item

Now given $(\underline{f},\underline{g})\in\mathcal{H}$, we solve the equation 
$$
(\mathbf{i} \beta \mathcal{I}-\mathcal{A}_d)(\underline{u},\underline{v})=(\underline{f},\underline{g})
$$
or equivalently,
\begin{equation}\label{Swave}
\left\{\begin{array}{l}
\underline{v} = \mathbf{i}\beta \underline{u} - \underline{f}
\\
\beta^{2} \underline{u} + \underline{u}^{\prime \prime} + \mathbf{i}\beta\,(\underline{a} \ast \underline{u}^\prime)^\prime =(\underline{a} \ast \underline{f}^\prime)^\prime - \mathbf{i} \beta \underline{f}- \underline{g}.
\end{array}\right.
\end{equation}
Let's define the operator
$$
A \underline{u} = - \underline{u}^{\prime \prime} - \mathbf{i} \beta \,(\underline{a} \ast \underline{u}^\prime)^\prime,\quad \forall\, \underline{u}\in V.
$$
It is easy to show that $A$ is an isomorphism from $V$ onto $V^\prime$ (where $V^\prime$ is the dual space of $V$ obtained by means of the inner product in $H$). Then the second line of \eqref{Swave} can be written as follow
\begin{equation}\label{Eqwave}
\underline{u}-\beta^{2}A^{-1} \underline{u}=A^{-1}\left(\underline{g}+ \mathbf{i} \beta \underline{f}-(\underline{a} \ast \underline{f}^\prime)^\prime\right).
\end{equation}
If $\underline{u}\in\mathrm{Ker}(\mathcal{I}-\beta^{2}A^{-1})$, then $\beta^{2}\underline{u}-A\underline{u}=0$. It follows that
\begin{equation}\label{Awave}
\beta^{2}\underline{u}+\underline{u}^{\prime \prime}+\mathbf{i}\beta (\underline{a}\ast \underline{u}^\prime)^\prime=0.
\end{equation}
Multiplying \eqref{Awave} by $\overline{\underline{u}}$ and integrating over $\mathcal{T}$, then by Green's formula we obtain 
$$
\beta^{2} \sum_{{\bar{\alpha }} \in I} \int_0^{\ell_{\bar{\alpha }}}|u_{\bar{\alpha }}(x)|^{2}\,\dd x- \sum_{{\bar{\alpha }} \in I} \int_0^{\ell_{\bar{\alpha }}}| u^\prime_{\bar{\alpha }}(x)|^{2}\,\dd x-\mathbf{i}\beta \sum_{{\bar{\alpha }} \in I} \int_0^{\ell_{\bar{\alpha }}} a_{\bar{\alpha }}(x) \, |u^\prime_{\bar{\alpha }}(x)|^{2}\,\dd x=0.
$$
This shows that 
$$
\sum_{{\bar{\alpha }} \in I} \int_0^{\ell_{\bar{\alpha }}} a_{\bar{\alpha }} (x) \, |u_{\bar{\alpha }}^\prime(x)|^{2}\,\dd x=0,
$$
which imply that $\underline{a} \ast \underline{u}^\prime =0$ in $\mathcal{T}$.
\\
Inserting this last equation into~\eqref{Awave} we get
$$
\beta^{2}\underline{u} +\underline{u}^{\prime \prime} =0,\qquad \text{in }\mathcal{T}.
$$
According to the first step, we have that $\mathrm{Ker}(\mathcal{I}-\beta^{2}A^{-1})=\{0\}$. On the other hand thanks to the compact embeddings $V\hookrightarrow H$ and $H\hookrightarrow V^\prime$ we see that $A^{-1}$ is a compact operator in $V$. Now thanks to Fredholm's alternative, the operator $(\mathcal{I}-\beta^{2}A^{-1})$ is bijective in $V$, hence the equation \eqref{Eqwave} have a unique solution in $V$, which yields that the operator $(\mathbf{i}\beta \mathcal{I}-\mathcal{A}_d)$ is surjective in the energy space $\mathcal{H}$. The proof is thus complete.

\end{itemize}

\end{proof}

Before stating the main results, we define a property (P) on $\underline{a}$ as follows
\begin{equation*}
(P)\;\;\;\forall \bar{\alpha}\in I,\;\;a_{\bar{\alpha}%
}^{\prime },a_{\bar{\alpha}}^{\prime \prime }\in L^{\infty }(0,\ell _{%
\bar{\alpha}})\;\; \text{and}\;\; \;\;\forall \bar{\alpha}\in I_{\mathcal{M}%
},\;\;a_{\bar{\alpha}
}^{\prime }(\ell _{\bar{\alpha}})- \ds \sum_{\beta =1}^{m_{\bar{\alpha}}}a_{\bar{%
\alpha }\circ \beta }^{\prime }(0)\leq 0. 
\end{equation*}

\begin{theorem} \label{th} Suppose that the function $\underline{a}$ satisfies property $(P)$, then
\begin{itemize}

\item[(i)] If $\underline{a}$ is continuous at every inner node of $\mathcal{T}$ then $(S_{d}(t))_{t \geq 0}$ is exponentially stable on $\mathcal{H}$.

\item[(ii)]  If $\underline{a}$ is not continuous at least at an inner node of $\mathcal{T}$ then $(S_{d}(t))_{t \geq 0}$ is polynomially stable on $\mathcal{H}$, in particular there exists $C>0$
such that for all $t >0$ we have
\begin{equation*}
\left\| e^{\mathcal{A}t}(\underline{u}^{0},\underline{u}^{1})\right\| _{%
\mathcal{H}}\leq \frac{C}{t^{2}}\left\| (\underline{u}^{0},\underline{u}%
^{1})\right\| _{\mathcal{D}(\mathcal{A})}, \, \forall \, (\underline{u}^{0},\underline{u}^{1})\in \mathcal{D}(\mathcal{A}).
\end{equation*}
\end{itemize}
\end{theorem}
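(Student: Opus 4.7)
My plan is to combine the Huang--Pr\"uss criterion (Lemma \ref{lem31}) for part (i) with the Borichev--Tomilov criterion (Lemma \ref{lem32}, used with $\alpha=1/2$) for part (ii). Since Lemma \ref{lem33} already delivers $\mathbf{i}\mathbb{R}\subset\rho(\mathcal{A}_d)$, what remains is the high-frequency resolvent estimate $\|(\mathbf{i}\beta\mathcal{I}-\mathcal{A}_d)^{-1}\|_{\mathcal{L}(\mathcal{H})}=O(1)$ in case (i) and $O(|\beta|^{1/2})$ in case (ii). I would argue by contradiction, supposing sequences $\beta_n\to\pm\infty$ and $(\underline{u}_n,\underline{v}_n)\in\mathcal{D}(\mathcal{A}_d)$ of unit norm such that $(\mathbf{i}\beta_n\mathcal{I}-\mathcal{A}_d)(\underline{u}_n,\underline{v}_n)=(\underline{f}_n,\underline{g}_n)$ tends to zero at the rate $o(1)$ for (i) and $o(|\beta_n|^{-1/2})$ for (ii). Taking the real part of the $\mathcal{H}$-pairing and invoking the dissipation identity from Lemma \ref{injlem} immediately gives
$$
\sum_{\bar\alpha\in I}\int_0^{\ell_{\bar\alpha}}a_{\bar\alpha}(x)\,|v'_{n,\bar\alpha}(x)|^2\,dx\;\longrightarrow\;0
$$
at the corresponding rate; coupled with $v_{n,\bar\alpha}=\mathbf{i}\beta_n u_{n,\bar\alpha}-f_{n,\bar\alpha}$, this also controls $\sqrt{a_{\bar\alpha}}\,u'_{n,\bar\alpha}$ on every K-V edge. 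This is the only piece of information that is free; all the rest has to be extracted by well-chosen multipliers.

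The heart of the argument is an edge-by-edge propagation. On each K-V edge $e_{\bar\alpha}$ I would test the second-order form of the resolvent equation $\beta_n^2 u_{n,\bar\alpha}+\bigl((1+\mathbf{i}\beta_n a_{\bar\alpha})u'_{n,\bar\alpha}\bigr)'=-\mathbf{i}\beta_n f_{n,\bar\alpha}-g_{n,\bar\alpha}$ against multipliers of the form $a_{\bar\alpha}\overline{u'_{n,\bar\alpha}}$ and $x\,a_{\bar\alpha}\overline{u'_{n,\bar\alpha}}$. Hypothesis $(P)$, placing $a'_{\bar\alpha},a''_{\bar\alpha}\in L^\infty$, is precisely what makes these integrations by parts legitimate and allows the remainders involving $a'$ and $a''$ to be absorbed using the available $L^2$-control of $\sqrt{a_{\bar\alpha}}\,v'_{n,\bar\alpha}$. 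From this one obtains smallness of $\|u'_{n,\bar\alpha}\|_{L^2}$ and $\|\beta_n u_{n,\bar\alpha}\|_{L^2}$, together with trace bounds for $u_{n,\bar\alpha}$ and for the flux $u'_{n,\bar\alpha}+a_{\bar\alpha}v'_{n,\bar\alpha}$ at the endpoints of $e_{\bar\alpha}$. On a purely elastic edge adjacent to a K-V edge, the undamped Helmholtz equation $\beta_n^2 u_{n,\bar\alpha}+u''_{n,\bar\alpha}=$ (small) is then treated by the classical multiplier $x\overline{u'_{n,\bar\alpha}}$, whose boundary contributions are now known to be small thanks to the trace data transferred through \eqref{ch8inh}--\eqref{ch8cont}. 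Since every maximal subgraph of purely elastic edges is by assumption a tree whose leaves are attached to K-V edges, iterating this step from the leaves to the root covers the whole network and produces $\|(\underline{u}_n,\underline{v}_n)\|_{\mathcal{H}}\to 0$, the desired contradiction.

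The main obstacle, and precisely what separates cases (i) and (ii), is the bookkeeping at interior vertices. The sign condition in $(P)$, $a'_{\bar\alpha}(\ell_{\bar\alpha})-\sum_\beta a'_{\bar\alpha\circ\beta}(0)\leq 0$, is used exactly to close up with the favorable sign the vertex contributions produced by the $x\,a_{\bar\alpha}\overline{u'_{n,\bar\alpha}}$-multiplier when the edge identities are summed over all branches meeting a given inner node. If $\underline{a}$ is continuous at that node, the transmission conditions \eqref{e5}--\eqref{e6} force the Dirichlet and flux traces on the two sides to match at the sharp $o(1)$ level so that no loss in $\beta$ is incurred, yielding the $O(1)$ bound of case (i). If $\underline{a}$ jumps at some inner node, the matching of traces can only be achieved modulo an extra factor of $|\beta_n|^{1/2}$, originating from an $H^{1/2}$-interpolation between the available $H^1$ and $L^2$ estimates on the elastic sides of the vertex; this unavoidable loss is the mechanism producing the $O(|\beta|^{1/2})$ resolvent growth and the $t^{-2}$ polynomial decay rate of case (ii).
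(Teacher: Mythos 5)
Your overall architecture coincides with the paper's: contradiction sequences at rates $o(1)$ (case (i), $\gamma=0$) and $o(|\beta_n|^{-1/2})$ (case (ii), $\gamma=1/2$), the dissipation identity giving $\beta_n^{\gamma/2}\|a_{\bar{\alpha}}^{1/2}v'_{\bar{\alpha},n}\|=o(1)$, multiplier identities on each edge, propagation from the K-V edges through the purely elastic subtrees from leaves to root, the sign condition in $(P)$ to discard vertex boundary terms, and an interpolation inequality to explain the $|\beta|^{1/2}$ loss when $\underline{a}$ jumps. However, there is a genuine gap at the central step. After the edge multiplier identity (the paper's (\ref{xsx})) one is left with the cross term $\mathrm{Im}\int_0^{\ell_{\bar{\alpha}}}q\,a_{\bar{\alpha}}\,\beta_n v_{\bar{\alpha},n}\overline{v'_{\bar{\alpha},n}}\,dx$, and this \emph{cannot} be absorbed by Cauchy--Schwarz against the free information $\|a_{\bar{\alpha}}^{1/2}v'_{\bar{\alpha},n}\|=o(1)$ alone: one also needs the a priori bound $\beta_n^{1-\gamma/2}\|a_{\bar{\alpha}}^{1/2}v_{\bar{\alpha},n}\|=O(1)$ (the paper's (\ref{nvv})), a separate and nontrivial estimate obtained by testing the equation with $\mathbf{i}\beta_n^{1-2\gamma}a_{\bar{\alpha}}v_{\bar{\alpha},n}$. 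It is in \emph{that} computation --- not in the weighted $\overline{u'}$ identity --- that the hypotheses $a'_{\bar{\alpha}},a''_{\bar{\alpha}}\in L^{\infty}$ and the vertex sign condition of $(P)$ are actually consumed, and in case (ii) it is also there that the Gagliardo--Nirenberg trace interpolation must be deployed to control $\mathbf{i}\beta_n^{1/2}T_{\bar{\alpha},n}\overline{v_{\bar{\alpha},n}}$ at inner nodes where $a$ does not vanish. Your plan treats this absorption as automatic, so as written the argument does not close.

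A second, smaller issue: your multipliers $a_{\bar{\alpha}}\overline{u'_{\bar{\alpha},n}}$ and $x\,a_{\bar{\alpha}}\overline{u'_{\bar{\alpha},n}}$ should be replaced by $q\,\overline{T_{\bar{\alpha},n}}$ with $T_{\bar{\alpha},n}=u'_{\bar{\alpha},n}+a_{\bar{\alpha}}v'_{\bar{\alpha},n}$. The domain $\mathcal{D}(\mathcal{A}_d)$ only guarantees $H^1$ regularity, hence well-defined endpoint traces, for the flux $T_{\bar{\alpha},n}$, not for $u'_{\bar{\alpha},n}$ separately; where $a_{\bar{\alpha}}$ is merely $L^{\infty}$ or discontinuous, the boundary terms produced by your integrations by parts are not defined. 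The freedom in the weight $q$ (the paper uses successively $q=\int_0^x a_{\bar{\alpha}}$, $q=x$, $q=\ell_{\bar{\alpha}}-x$ and $q=1$) is also what first yields the interior $L^2$ smallness on K-V edges and then the endpoint trace smallness that is transferred across the vertices to the purely elastic edges.
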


\begin{proof}
According to Lemma \ref{lem31}, Lemma \ref{lem32}, and Lemma \ref{lem33},   it suffices to prove that for $\gamma =0$, when $\underline{a}$ is continous at every inner node, or $\gamma = 1/2$, when $\underline{a}$ is not continuous at an inner node, there exists $r>0$ such that  
\begin{equation}
\inf_{\left\| (\underline{u},\underline{v})\right\| _{\mathcal{H}}, \beta \in \mathbb{R}}\beta ^{\gamma}\left\| (\mathbf{i}\beta \mathcal{I}-\mathcal{A}_{d})(\underline{u},%
\underline{v})\right\| _{\mathcal{H}} \geq r.  \label{3.22}
\end{equation}
Suppose that (\ref{3.22}) fails. Then there exists a sequence of real numbers 
$\beta _{n}$, with $\beta _{n}\rightarrow \infty $ (without loss of
generality, we suppose that $\beta _{n}>0$ ), and a sequence of vectors $(%
\underline{u}_{n},\underline{v}_{n})$ in $\mathcal{D}(\mathcal{A}_d)$ with $%
\left\| (\underline{u}_{n},\underline{v}_{n})\right\| _{\mathcal{H}}=1$ such
that 
\begin{equation}
\beta _{n}^{\gamma}\left\| (\mathbf{i}\beta _{n}\mathcal{I}-\mathcal{A}_{d})(\underline{u}_{n},%
\underline{v}_{n})\right\| _{\mathcal{H}}\rightarrow 0.  \label{ex0}
\end{equation}

We shall prove that $\left\| (\underline{u}_{n},\underline{v}_{n})\right\| _{%
\mathcal{H}}=o(1),$ which contradict the hypotheses on $(\underline{u}_{n},%
\underline{v}_{n}).$

Writing (\ref{ex0}) in terms of its components, we get for every $\bar{%
\alpha }\in I,$%
\begin{eqnarray}
\beta _{n}^{\gamma}(\mathbf{i}\beta _{n}u_{\bar{\alpha},n}-v_{\bar{\alpha},n}) &=:&f_{%
\bar{\alpha},n}=o(1)\;\;\;\text{in }H^{1}(0,\ell _{\bar{\alpha}%
}),  \label{ex1} \\
\beta _{n}^{\gamma}(\mathbf{i}\beta _{n}v_{\bar{\alpha},n}-(u^{\prime}_{%
\bar{\alpha},n}+a_{\bar{\alpha}}v^{\prime}_{\bar{\alpha 
},n})^{\prime}) &=:&g_{\bar{\alpha},n}=o(1)\;\;\;\text{in }L^{2}(0,\ell _{%
\bar{\alpha}}).  \label{ex2}
\end{eqnarray}

Note that 
\begin{equation*}
\beta _{n}^{\gamma}\sum_{\bar{\alpha}\in I}\int_{0}^{\ell _{\bar{\alpha}}}a_{%
\bar{\alpha}}(x)\left| v^{\prime}_{\bar{\alpha}}(x)\right|
^{2}dx=Re\left( \left\langle 
\beta _{n}^{\gamma} (\mathbf{i}\beta _{n}\mathcal{I}-\mathcal{A}%
_{d})(\underline{u}_{n},\underline{v}_{n}),(\underline{u}_{n},\underline{v}%
_{n})\right\rangle _{\mathcal{H}}\right) =o(1).
\end{equation*}
Hence, for every $\bar{\alpha} \in I$ 
\begin{equation}
\beta _{n}^{\frac{\gamma}{2}}\left\| a_{\bar{\alpha}}^{\frac{1}{2}}v^{\prime}_{\bar{\alpha 
},n}\right\| _{L^{2}(0,\ell _{\bar{\alpha}})}=o(1).  \label{ex3}
\end{equation}
Then from (\ref{ex1}), we get that 
\begin{equation}
\beta _{n}^{\frac{\gamma}{2}} \left\| a_{\bar{\alpha}}^{\frac{1}{2}}\beta _{n}u^{\prime}_{%
\bar{\alpha},n}\right\| _{L^{2}(0,\ell _{\bar{\alpha}})}=o(1).
\label{ex4}
\end{equation}

Define $T_{\bar{\alpha},n}=(u^{\prime}_{\bar{\alpha},n}+a_{%
\bar{\alpha}}v^{\prime}_{\bar{\alpha},n})$ and multiplying (%
\ref{ex2}) by $\beta _{n}^{-\gamma}qT_{\bar{\alpha},n}$ where $q$ is any real function in $H%
^{2}(0,\ell _{\bar{\alpha}})$, we get 
\begin{equation}
Re\int_{0}^{\ell _{\bar{\alpha}}}\mathbf{i}\beta _{n}v_{\bar{%
\alpha },n}q\overline{T_{\bar{\alpha},n}}dx-Re\int_{0}^{\ell _{%
\bar{\alpha}}}T^{\prime}_{\bar{\alpha},n}q\overline{T_{%
\bar{\alpha},n}}dx=o(1).  \label{ex5}
\end{equation}

Using (\ref{ex1}) we have 
\begin{eqnarray}
&&Re\int_{0}^{\ell _{\bar{\alpha}}}\mathbf{i}\beta _{n}v_{\bar{%
\alpha },n}q\overline{T_{\bar{\alpha},n}}dx  \notag \\
&=&-Re\int_{0}^{\ell _{\bar{\alpha}}}v_{\bar{\alpha}%
,n}q(\overline{v^{\prime}_{\bar{\alpha},n}}+
\beta _{n}^{-\gamma}\overline{%
f^{\prime}_{\bar{\alpha},n}})dx+Re\int_{0}^{\ell _{\bar{\alpha}}}\mathbf{i%
}\beta _{n}v_{\bar{\alpha},n}qa_{\bar{\alpha}}%
\overline{v^{\prime}_{\bar{\alpha},n}}dx  \notag \\
&=&-\frac{1}{2}\left[ q(x)\left| v_{\bar{\alpha},n}(x)\right| ^{2}%
\right] _{0}^{\ell _{\bar{\alpha}}}+\frac{1}{2}\int_{0}^{\ell _{%
\bar{\alpha}}}q^{\prime }\left| v_{\bar{\alpha},n}\right|
^{2}dx-Im\int_{0}^{\ell _{\bar{\alpha}}}qa_{\bar{\alpha}}\beta
_{n}v_{\bar{\alpha},n}\overline{v^{\prime}_{\bar{\alpha},n}}%
dx + o(1).  \label{ex6}
\end{eqnarray}
On the other hand, integrating the second term in (\ref{ex5}) by parts,
yields 
\begin{equation}
Re\int_{0}^{\ell _{\bar{\alpha}}}T^{\prime}_{\bar{\alpha}%
,n}q\overline{T_{\bar{\alpha},n}}dx=\frac{1}{2}\left[ q(x)\left| T_{%
\bar{\alpha},n}(x)\right| ^{2}\right] _{0}^{\ell _{\bar{\alpha}%
}}-\frac{1}{2}\int_{0}^{\ell _{\bar{\alpha}}} q^{\prime}\left| T_{%
\bar{\alpha},n}\right| ^{2}dx.  \label{ex7}
\end{equation}
Hence, by substituing (\ref{ex6}) and (\ref{ex7}) into (\ref{ex5}), we
obtain 
\begin{gather}
\frac{1}{2}\int_{0}^{\ell _{\bar{\alpha}}}q^{\prime}\left| v_{%
\bar{\alpha},n}\right| ^{2}dx+\frac{1}{2}\int_{0}^{\ell _{\bar{%
\alpha }}}q^{\prime}\left| T_{\bar{\alpha},n}\right|
^{2}dx-Im\int_{0}^{\ell _{\bar{\alpha}}}qa_{\bar{\alpha}}\beta
_{n}v_{\bar{\alpha},n}\overline{v^{\prime}_{\bar{\alpha},n}}%
dx  \notag \\
-\frac{1}{2}\left( \left[ q(x)\left| v_{\bar{\alpha},n}(x)\right| ^{2}%
\right] _{0}^{\ell _{\bar{\alpha}}}+\left[ q(x)\left| T_{\overline{%
\alpha },n}(x)\right| ^{2}\right] _{0}^{\ell _{\bar{\alpha}}}\right)
=o(1). \label{xsx}
\end{gather}

\begin{lemma} \label{lemd}
The following property holds
\begin{equation}
Im\int_{0}^{\ell _{\bar{\alpha}}}qa_{\bar{\alpha}}\beta _{n}v_{%
\bar{\alpha},n}\overline{v^{\prime}_{\bar{\alpha},n}}dx=o(1).
\label{ex8_3}
\end{equation}
\end{lemma}

\begin{proof}

Since $
\beta _{n}^{\frac{\gamma}{2}}a_{\bar{\alpha}}^{\frac{1}{2}}v^{\prime}_{\bar{\alpha},n%
}\rightarrow 0$ in $L^{2}(0,\ell _{\bar{\alpha}})$ and $qa_{\bar{%
\alpha }}^{\frac{1}{2}}\in L^{\infty }(0,\ell _{\bar{\alpha}}),$ it
suffices to prove that 
\begin{equation}
\beta _{n}^{1-\frac{\gamma}{2}}\left\| a_{\bar{\alpha}}^{\frac{1}{2}}v_{\bar{\alpha}%
,n}\right\| _{L^{2}(0,\ell _{\bar{\alpha}})}=O(1). \label{nvv}
\end{equation}
For this, taking the inner product of (\ref{ex2}) by $\mathbf{i}
\beta _{n}^{1-2 \gamma}a_{\bar{\alpha}}v_{\bar{\alpha},n}$ leads to 
\begin{equation}
\beta _{n}^{2-\gamma}\left\| a_{\bar{\alpha}}^{\frac{1}{2}}v_{\bar{\alpha}%
,n}\right\| _{L^{2}(0,\ell _{\bar{\alpha}})}^{2}=-\mathbf{i}
\beta _{n}^{1-\gamma}\int_{0}^{\ell _{\bar{\alpha}}}T^{\prime}_{\bar{\alpha}%
,n}a_{\bar{\alpha}}\overline{v_{\bar{\alpha},n}}dx-\mathbf{i}
\beta _{n}^{1-2\gamma}\int_{0}^{\ell _{\bar{\alpha}}}g_{\bar{\alpha},n}a_{%
\bar{\alpha}}\overline{v_{\bar{\alpha},n}}dx.  \label{ex8_1}
\end{equation}
Since $a_{\bar{\alpha}} \in L^{\infty }(0,\ell _{\bar{\alpha}})$ and $g_{\bar{%
\alpha },n}\rightarrow 0$ in $L^{2}(0,\ell _{\bar{\alpha}})$ we can
deduce the inequality 
\begin{equation}
-Re(\mathbf{i}\beta _{n}^{1-2\gamma}\int_{0}^{\ell _{\bar{\alpha}}}g_{\bar{%
\alpha },n}a_{\bar{\alpha}}\overline{v_{\bar{\alpha},n}}dx)\leq 
\frac{1}{4}\beta _{n}^{2-\gamma}\left\| a_{\bar{\alpha}}^{\frac{1}{2}}v_{%
\bar{\alpha},n}\right\| _{L^{2}(\omega _{\bar{\alpha}%
})}^{2}+o(1).  \label{ex8_2}
\end{equation}

On the other hand, we have
\begin{eqnarray}
&&-Re \left(\mathbf{i}\beta _{n}^{1-\gamma}\int_{0}^{\ell _{\bar{\alpha}}}T^{\prime}_{\bar{\alpha},n}a_{\bar{\alpha}}\overline{v_{\bar{%
\alpha },n}}dx\right)  \notag \\
&=&-Re\left[ \mathbf{i}\beta _{n}^{1-\gamma}T_{\bar{\alpha},n}(x)a_{\bar{%
\alpha }}(x)\overline{v_{\bar{\alpha},n}}(x)\right] _{0}^{\ell _{%
\bar{\alpha}}}  \notag \\
&&+Re\left[ \mathbf{i}\beta _{n}^{1-\gamma}\int_{0}^{\ell _{\bar{\alpha}}}\left(
a_{\bar{\alpha}}^{\prime }u^{\prime}_{\bar{\alpha},n}%
\overline{v_{\bar{\alpha},n}}+a_{\bar{\alpha}}a_{\bar{%
\alpha }}^{\prime }v^{\prime}_{\bar{\alpha},n}\overline{v_{%
\bar{\alpha},n}}+a_{\bar{\alpha}}u^{\prime}_{\bar{%
\alpha },n}\overline{v^{\prime}_{\bar{\alpha},n}}\right) dx\right]. 
\label{ex81}
\end{eqnarray}
Using (\ref{ex3}) and (\ref{ex4}) we have 
\begin{equation}
Re\left( \mathbf{i}\beta _{n}^{1-\gamma}\int_{0}^{\ell _{\bar{\alpha}}}a_{%
\bar{\alpha}}u^{\prime}_{\bar{\alpha},n}%
\overline{v^{\prime}_{\bar{\alpha},n}}dx\right) =o(1).  \label{ex82}
\end{equation}
Using again (\ref{ex3}) and the fact that $ a^{\prime}_{\bar{\alpha}} \in L^{\infty }(0,\ell _{\bar{\alpha}}),$ we conclude that 
\begin{equation}
Re\left( \mathbf{i}\beta _{n}^{1-\gamma}\int_{0}^{\ell _{\bar{\alpha}}}a_{%
\bar{\alpha}}a^{\prime}_{\bar{\alpha}}v^{\prime}_{\bar{%
\alpha },n}\overline{v_{\bar{\alpha},n}}dx\right) \leq \frac{1}{4}%
\beta _{n}^{2-\gamma}\left\| a_{\bar{\alpha}}^{\frac{1}{2}}v_{\bar{\alpha},n%
}\right\| _{L^{2}(0,\ell _{\bar{\alpha}})}^{2}+o(1).  \label{ex9}
\end{equation}
Now by (\ref{ex2}), we obtain after integrating by parts that
\begin{eqnarray*}
&&Re\left[ \mathbf{i}\beta _{n}^{1-\gamma}\int_{0}^{\ell _{\bar{\alpha}}}a^{\prime}_{\bar{\alpha}}u^{\prime}_{\bar{\alpha},n}\overline{%
v_{\bar{\alpha},n}}dx\right] =Re\left[\beta _{n}^{-\gamma} \int_{0}^{\ell _{\bar{%
\alpha }}}a^{\prime}_{\bar{\alpha}}(v^{\prime}_{\bar{\alpha}%
,n}+\beta _{n}^{-\gamma}f^{\prime}_{\bar{\alpha},n})\overline{v_{\bar{\alpha},n}}%
dx\right]  \\
&=&\frac{1}{2}\left[\beta _{n}^{-\gamma} a^{\prime}_{\bar{\alpha}}(x)\left| v_{\bar{\alpha}%
,n}(x)\right| ^{2}\right] _{0}^{\ell _{\bar{\alpha}}}-\frac{1}{2}%
\beta _{n}^{-\gamma}\int_{0}^{\ell _{\bar{\alpha}}}a^{\prime \prime}_{\bar{\alpha}}\left| v_{\bar{\alpha},n}\right| ^{2}dx+o(1).
\end{eqnarray*}
Furthermore, using that $a^{\prime \prime}_{\bar{\alpha}}\in
L^{\infty }(0,\ell _{\bar{\alpha}})$ and that $v_{\bar{\alpha},n}
$ is bounded, we deduce 
\begin{equation}
Re\left[ \mathbf{i}\beta _{n}^{1-\gamma}\int_{0}^{\ell _{\bar{\alpha}}}a^{\prime}_{\bar{\alpha}}u^{\prime}_{\bar{\alpha},n}\overline{%
v_{\bar{\alpha},n}}dx\right] \leq \frac{1}{2}\left[\beta _{n}^{-\gamma} a^{\prime}_{\bar{\alpha}}(x)\left| v_{\bar{\alpha}%
,n}(x)\right| ^{2}\right] _{0}^{\ell _{\bar{\alpha}}}+O(1).  \label{ex91}
\end{equation}

Combining (\ref{ex82}), (\ref{ex9}), (\ref{ex91}) with (\ref{ex81}),
we get 
\begin{eqnarray}
-Re(\mathbf{i}\beta _{n}^{1-\gamma}\int_{0}^{\ell _{\bar{\alpha}}}T^{\prime}_{%
\bar{\alpha},n}a\overline{v_{\bar{\alpha},n}}dx) &\leq &-Re\left[
\mathbf{i}\beta _{n}^{1-\gamma}T_{\bar{\alpha},n}(x)a_{\bar{\alpha}}(x)%
\overline{v_{\bar{\alpha},n}}(x)\right] _{0}^{\ell _{\bar{\alpha}%
}}  \notag \\
&&+\frac{1}{2}\left[\beta _{n}^{-\gamma} a^{\prime}_{\bar{\alpha}}(x)\left| v_{\bar{\alpha}%
,n}(x)\right| ^{2}\right] _{0}^{\ell _{\bar{\alpha}}}+\frac{1}{4}\beta _{n}^{2-\gamma}\left\|
a_{\bar{\alpha}}^{\frac{1}{2}}v_{\bar{\alpha},n}\right\|_{L^{2}(0,\ell _{\bar{\alpha}})}^{2} + O(1).  
\label{ex92prime}
\end{eqnarray}
Thus, substituting (\ref{ex8_2}) and (\ref{ex92prime}) into (\ref{ex8_1}) leads to
\begin{eqnarray}
\frac{1}{2}\beta _{n}^{2-\gamma}\left\|
a_{\bar{\alpha}}^{\frac{1}{2}}v_{\bar{\alpha},n}\right\|_{L^{2}(0,\ell _{\bar{\alpha}})}^{2} &\leq &-Re\left[
\mathbf{i}\beta _{n}^{1-\gamma}T_{\bar{\alpha},n}(x)a_{\bar{\alpha}}(x)%
\overline{v_{\bar{\alpha},n}}(x)\right] _{0}^{\ell _{\bar{\alpha}%
}}  \notag \\
 &&+\sum_{\bar{\alpha}\in I}\left( a^{\prime}_{\bar{\alpha}}(\ell _{\bar{\alpha}})\left| v_{\bar{\alpha},n}(\ell
_{\bar{\alpha}})\right| ^{2}-a^{\prime}_{\bar{\alpha}}(0)\left|
v_{\bar{\alpha},n}(0)\right| ^{2}\right) +O(1) \label{ex92} 
\end{eqnarray}
Case (i): Here $\underline{a}$ is continuous in all nodes. For $\gamma=0$, it follows from (\ref{ex92}) that
\begin{equation}
\sum_{\bar{\alpha}\in I}\beta _{n}^{2}\left\| a_{\bar{\alpha}}^{\frac{1}{2}%
}v_{\bar{\alpha},n}\right\| _{L^{2}(0,\ell _{\bar{%
\alpha }})}^{2}\leq 2\sum_{\bar{\alpha}\in I}\left( a^{\prime}_{\bar{\alpha}}(\ell _{\bar{\alpha}})\left| v_{\bar{\alpha},n}(\ell
_{\bar{\alpha}})\right| ^{2}-a^{\prime}_{\bar{\alpha}}(0)\left|
v_{\bar{\alpha},n}(0)\right| ^{2}\right) +O(1).  \label{ex96}
\end{equation}
We have used the continuity condition of $\underline{v}$ and $\underline{a}$
and the compatibilty condition (\ref{ch8eq38}) at inner nodes and the Dirichlet
condition of $\underline{u}$ and $\underline{v}$ at externel nodes.

To conclude, notice that from the property (P) we
deduce that 
\begin{equation*}
\sum_{\bar{\alpha}\in I}\left( a^{\prime}_{\bar{\alpha}}(\ell _{%
\bar{\alpha}})\left| v_{\bar{\alpha},n}(\ell _{\bar{\alpha}%
})\right| ^{2}-a^{\prime}_{\bar{\alpha}}(0)\left| v_{\bar{\alpha}%
,n}(0)\right| ^{2}\right) \leq 0.
\end{equation*}
Then, (\ref{ex96}), yields 
\begin{equation*}
\beta _{n}^{2}\left\| a_{\bar{\alpha}}^{\frac{1}{2}}v_{\bar{\alpha}%
,n}\right\| _{L^{2}(0,\ell _{\bar{\alpha}})}^{2}=O(1)
\end{equation*}
for every $\bar{\alpha} \in I,$ and the proof of Lemma \ref{lemd} is complete for case (i).

Case (ii): Recall that here the function $\underline{a}$ is not continuous at some internal nodes. 
For $\gamma=\frac{1}{2}$, we want estimate the first term in the right hand side of (\ref{ex92}). To do this it suffices to estimate $Re (\mathbf{i}\beta _{n}^{1-\gamma}T_{\bar{\alpha},n}(x)a_{\bar{%
\alpha }}(x_{\bar{\alpha}})\overline{v_{\bar{\alpha},n}}(x))$ at an inner node $x=x_{\bar{\alpha}}$ when $a_{\bar{\alpha}}(x_{\bar{\alpha}})\neq 0$.
For simplicity and without loss of generality we suppose that $x_{\bar{\alpha}}$ is the end of $e_{\bar{\alpha},n}$ identified to $0$ via $\pi_{\bar{\alpha}}$.

Since $a_{\bar{\alpha}}$ is continuous on $[0,\ell_{\bar{\alpha}}]$, there exists a positive number $k_{\bar{\alpha}}<\ell_{\bar{\alpha}}$ such that $a_{\bar{\alpha}}(x)\neq 0$ on $[0,\ell_{\bar{\alpha}}]$. We first prove 
\begin{equation}
\beta _{n}\left\| v_{\bar{\alpha },n}\right\|_{L^{2}(0,k _{\bar{\alpha }})} ^{2}=o(1). \label{c2}
\end{equation}

We need the following Gagliardo-Nirenberg inequality \cite{nir} in estimation:

There exists two positives constants $C_{1}$ and $C_{2}$ such that, for any $w$ in $H^{1}(0,k_{\bar{\alpha}})$,
\begin{equation} 
\left\| w\right\| _{L^{\infty }(0,k_{\bar{\alpha}})}\leq C_{1}\left\| w\right\|_{L^{2}(0,k _{\bar{\alpha}})}^{\frac{1}{2}}\left\| w^{\prime}\right\|_{L^{2}(0,k _{\bar{\alpha}})}^{\frac{1}{2}}+C_{2}\left\|w\right\|_{L^{2}(0,k _{\bar{\alpha}})}. \label{gn}
\end{equation}

Multiplying (\ref{ex2}) by $\mathbf{i}\beta _{n}^{-1}v_{\bar{\alpha },n}$ in $ L^{2}(0,\ell _{\bar{\alpha }})$  and
integrating by parts, we obtain
\begin{equation*}
\beta _{n}^{\frac{1}{2}}\left\| v_{\bar{\alpha },n}\right\| ^{2}_{L^{2}(0,\ell _{\bar{\alpha }})}=-%
\mathbf{i}\beta _{n}^{-\frac{1}{2}}\left[ T_{\bar{\alpha },n}(x)\overline{%
v_{\bar{\alpha },n}}(x)\right] _{0}^{k _{\bar{\alpha }}}+\mathbf{i}%
\beta _{n}^{-\frac{1}{2}}\int_{0}^{k _{\bar{\alpha }}}T_{\bar{%
\alpha },n}\overline{v^{\prime}_{\bar{\alpha },n}}dx+o(1).
\end{equation*}
By (\ref{ex3}) and (\ref{ex4}) we have $\mathbf{i}\beta _{n}^{-\frac{1}{2}}\int_{0}^{k _{\bar{%
\alpha }}}T_{\overline{\alpha },n}\overline{v^{\prime}_{\bar{\alpha 
},n}}dx=o(1).$ 

Using Gagliardo-Nerenberg inequality (\ref{gn}), (\ref{ex3}), (\ref{ex4}) and the boundedness of $v_{\bar{\alpha 
},n}$,
\begin{eqnarray*}
\left\| v_{\bar{\alpha },n}\right\| _{L^{\infty
}(0,k _{\bar{\alpha}})} &\leq &C_{1}\left\| v_{\bar{\alpha },n}\right\|_{L^{2}(0,k _{\bar{\alpha }})} ^{\frac{1%
}{2}}\left\| v^{\prime}_{\bar{\alpha },n}\right\|_{L^{2}(0,k _{\bar{\alpha }})} ^{\frac{1}{2}}+C_{2}\left\|
v_{\bar{\alpha },n}\right\|_{L^{2}(0,k _{\bar{\alpha }})}=O(1) , \\
\beta _{n}^{-\frac{3}{8}}\left\| T_{\bar{\alpha },n}\right\| _{L^{\infty
}(0,k _{\bar{\alpha}})} &\leq
&C_{1}\left\| \beta _{n}^{\frac{1}{4}}T_{\bar{\alpha },n}\right\|_{L^{2}(0,k _{\bar{\alpha }})} ^{\frac{1}{2}}\left\|
\beta _{n}^{-1}T^{\prime}_{\bar{\alpha },n}\right\|_{L^{2}(0,k _{\bar{\alpha }})} ^{\frac{1}{2}}+C_{2}\beta _{n}^{-%
\frac{3}{8}}\left\| T_{\bar{\alpha },n}\right\|_{L^{2}(0,k _{\bar{\alpha }})}=o(1) .
\end{eqnarray*}
It follows that
$\mathbf{i}\beta _{n}^{-\frac{1}{2}}\left[ T_{\bar{\alpha },n}(x)\overline{%
v_{\bar{\alpha },n}}(x)\right] _{0}^{k _{\bar{\alpha }}}=o(1)$ and
then $\beta _{n}^{\frac{1}{2}}\left\| v_{\bar{\alpha },n}\right\|_{L^{2}(0,k _{\bar{\alpha }})}
^{2}=o(1).$

Then, we multiply (\ref{ex2}) by $\mathbf{i}\beta _{n}^{-\frac{1}{2}}v_{\bar{%
\alpha },n}$ and we repeat exactly the same strategy as before, using (\ref{ex2}) and $\beta _{n}^{\frac{1}{2}}\left\| v_{\bar{\alpha },n}\right\|
^{2}=o(1)$, we obtain (\ref{c2}).

We are now ready to estimate $-Re (\mathbf{i}\beta _{n}^{\frac{1}{%
2}} T_{\bar{\alpha },n}(0)\overline{v_{\bar{\alpha },n}}(0))
.$

Applying Gagliardo-Nerenberg inequality (\ref{gn}) to $w=v_{\bar{\alpha },n}$ we obtain, using (\ref{ex3}), 
\begin{eqnarray*}
\beta _{n}^{\frac{1}{2}}\left\| v_{\bar{\alpha },n}\right\| _{L^{\infty
}(0,k _{\bar{\alpha}})} &\leq &C_{1}\left\|
\beta _{n}^{\frac{3}{4}}v_{\bar{\alpha },n}\right\|_{L^{2}(0,k _{\bar{\alpha }})} ^{\frac{1}{2}}\left\| \beta _{n}^{%
\frac{1}{4}}v^{\prime}_{\bar{\alpha },n}\right\|_{L^{2}(0,k _{\bar{\alpha }})} ^{\frac{1}{2}}+C_{2}\beta _{n}^{\frac{%
1}{2}}\left\| v_{\bar{\alpha },n}\right\| _{L^{2}(0,k_{\bar{\alpha }})}  \\
&\leq &o(1)+ \left\| \beta _{n}^{\frac{3}{4}}v_{\bar{\alpha },n}\right\| o(1).
\end{eqnarray*}
Using again the Gagliardo-Nerenberg inequality (\ref{gn}) with $w=T_{\bar{\alpha },n}$,
\begin{eqnarray*}
\left\| T_{\bar{\alpha },n}\right\| _{L^{\infty
}(0,k _{\bar{\alpha}})} &\leq &C_{1}\left\| \beta _{n}^{\frac{1}{4}%
}T_{\bar{\alpha },n}\right\|_{L^{2}(0,k _{\bar{\alpha }})} ^{\frac{1}{2}}\left\| \beta _{n}^{-\frac{1}{4}}T^{\prime}_{\bar{\alpha },n}\right\| ^{\frac{1}{2}}+C_{2}\left\| T_{\bar{\alpha },n}\right\|_{L^{2}(0,k _{\bar{\alpha }})}  \\
&\leq &o(1)\left\| \beta _{n}^{\frac{1}{4}%
}T^{\prime}_{\bar{\alpha },n}\right\|^{\frac{1}{2}}_{L^{2}(0,k _{\bar{\alpha }})} + o(1)  \\
&\leq &o(1)+o(1) \left\| \beta _{n}^{\frac{3}{4}%
}v_{\bar{\alpha },n}\right\|_{L^{2}(0,k _{\bar{\alpha }})} \, . 
\end{eqnarray*}
Here, we have used (\ref{ex2}),(\ref{ex3}) and (\ref{ex4}). Then
\begin{equation}
\vert Re (\mathbf{i}\beta _{n}^{\frac{1}{%
2}} T_{\bar{\alpha },n}(0)\overline{v_{\bar{\alpha },n}}(0))\vert
 \leq \beta _{n}^{\frac{1}{2}}\left\| v_{\bar{\alpha },n}\right\| _{L^{\infty
}(0,\ell _{\bar{\alpha}})}\left\|
T_{\bar{\alpha },n}\right\| _{L^{\infty
}(0,\ell _{\bar{\alpha}})}\leq \frac{1}{4}\beta _{n}^{\frac{3}{2}}\left\| %
v_{\bar{\alpha },n}\right\|_{L^{2}(0,k _{\bar{\alpha }})} ^{2}+o(1) \label{bb3}
\end{equation}
and
\begin{equation}
-Re \left[ \mathbf{i}\beta _{n}^{\frac{1}{%
2}} T_{\bar{\alpha },n}(x)\overline{v_{\bar{\alpha },n}}(x)\right]_{0}^{k _{\bar{\alpha }}} 
 \leq 2\beta _{n}^{\frac{1}{2}}\left\| v_{\bar{\alpha },n}\right\| _{L^{\infty
}(0,\ell _{\bar{\alpha}})}\left\|
T_{\bar{\alpha },n}\right\| _{L^{\infty
}(0,\ell _{\bar{\alpha}})}\leq \frac{1}{2}\beta _{n}^{\frac{3}{2}%
}\left\| v_{\bar{\alpha },n}\right\|_{L^{2}(0,k _{\bar{\alpha }})} ^{2}+o(1) \label{bb2}
\end{equation}

Multiplying (\ref{ex2}) by $\mathbf{i}v_{\bar{\alpha },n}$ in $ L^{2}(0,k _{\bar{\alpha }})$  and
integrating by parts, we obtain
\begin{equation}
\beta _{n}^{\frac{3}{2}}\left\| v_{\bar{\alpha },n}\right\| ^{2}_{L^{2}(0,k _{\bar{\alpha }})}=-%
\mathbf{i}\beta _{n}^{\frac{1}{2}}\left[ T_{\bar{\alpha },n}(x)\overline{%
v_{\bar{\alpha },n}}(x)\right] _{0}^{k _{\bar{\alpha }}}+\mathbf{i}%
\beta _{n}^{\frac{1}{2}}\int_{0}^{k _{\bar{\alpha }}}T_{\bar{%
\alpha },n}\overline{v^{\prime}_{\bar{\alpha },n}}dx+o(1). \label{bb1}
\end{equation}
Using (\ref{ex3}) and (\ref{ex4}), the second term on the left hand side of (\ref{bb1}) converge to zero. We conclude, using (\ref{bb2}) that
\begin{equation*}
\beta _{n}^{\frac{3}{2}}\left\| v_{\bar{\alpha },n}\right\| ^{2}_{L^{2}(0,k _{\bar{\alpha }})}=o(1).
\end{equation*}
Return back to (\ref{bb3}) which yields
\begin{equation*}
-Re (\mathbf{i}\beta _{n}^{\frac{1}{%
2}} T_{\bar{\alpha },n}(0)\overline{v_{\bar{\alpha },n}}(0))
 =o(1).
\end{equation*} 
We obtain the same result if we suppose that $x_{\bar{\alpha}}$ is the end of $e_{\bar{\alpha},n}$ identified to $\ell_{\bar{\alpha}} $ via $\pi_{\bar{\alpha}}$, that is
\begin{equation*}
-Re (\mathbf{i}\beta _{n}^{\frac{1}{%
2}} T_{\bar{\alpha },n}(\ell_{\bar{\alpha}})\overline{v_{\bar{\alpha },n}}(\ell_{\bar{\alpha}}))
 =o(1),
\end{equation*}
and we then conclude that the first term on the right hand side of (\ref{ex92}) converge to zero.

Now, summing over $\bar{\alpha} \in I$ in (\ref{ex92}) by taking into account the estimate of  $-Re \left[ \mathbf{i}\beta _{n}^{\frac{1}{%
2}} T_{\bar{\alpha },n}(x)\overline{v_{\bar{\alpha },n}}(x)\right]_{0}^{\ell _{\bar{\alpha }}}$ and the inequality in (P), we obtain that
\begin{equation*}
\sum_{\bar{\alpha}\in I}\beta _{n}^\frac{1}{2}\left\| a_{\bar{\alpha}}^{\frac{1}{2}%
}\beta _{n}v_{\bar{\alpha},n}\right\| _{L^{2}(0,\ell _{\bar{%
\alpha }})}^{2}=O(1),
\end{equation*}

then 
\begin{equation*}
\beta _{n}^\frac{3}{2}\left\| a_{\bar{\alpha}}^{\frac{1}{2}}v_{\bar{\alpha}%
,n}\right\| _{L^{2}(0,\ell _{\bar{\alpha}})}^{2}=O(1)
\end{equation*}
for every $\bar{\alpha} \in I,$ and the proof of Lemma \ref{lemd} is complete for case (ii).
\end{proof}
Return back to the general case. Substituting (\ref{ex8_3}) in (\ref{xsx}) leads to 
\begin{equation}
\frac{1}{2}\int_{0}^{\ell _{\bar{\alpha}}}  q^{\prime} \, \left| v_{%
\bar{\alpha},n}\right| ^{2}dx+\frac{1}{2}\int_{0}^{\ell _{\bar{%
\alpha }}} q^{\prime}\left| T_{\bar{\alpha},n}\right| ^{2}dx-\frac{1}{2%
}\left[ q(x)\left( \left| v_{\bar{\alpha},n}(x)\right| ^{2}+\left| T_{%
\bar{\alpha},n}(x)\right| ^{2}\right) \right] _{0}^{\ell _{\bar{%
\alpha }}}=o(1)  \label{ex10}
\end{equation}

for every $\bar{\alpha}\in I.$

Let $\bar{\alpha}\in I$ such that $e_{\bar{\alpha}}$ is a K-V
string. First, note that from (\ref{nvv}), we deduce that 
\begin{equation*}
\left\| a_{\bar{\alpha}}^{\frac{1}{2}}v_{\bar{\alpha},n}\right\|
_{L^{2}(0,\ell _{\bar{\alpha}})}^{2}=o(1).
\end{equation*}
Then, we take $q(x)=\int_{0}^{x}a_{\bar{\alpha}}(s)ds$ in (\ref{ex10}%
) to obtain
\begin{equation}
\frac{1}{2}\int_{0}^{\ell _{\bar{\alpha}}}a_{\bar{\alpha}}\left|
T_{\bar{\alpha},n}\right| ^{2}dx-\frac{1}{2}\left( \int_{0}^{\ell _{%
\bar{\alpha}}}a_{\bar{\alpha}}(s)ds\right) \left( \left| v_{%
\bar{\alpha},n}(\ell _{\bar{\alpha}})\right| ^{2}+\left| T_{%
\bar{\alpha},n}(\ell _{\bar{\alpha}})\right| ^{2}\right) =o(1).
\label{ex8_5}
\end{equation}

Since
$
\frac{1}{2}\int_{0}^{\ell _{\bar{\alpha}}}a_{\bar{\alpha}}\left|
T_{\bar{\alpha},n}\right| ^{2}dx=o(1)
$
and $\int_{0}^{\ell _{\bar{\alpha}}}a_{%
\bar{\alpha}}(s)ds>0$,
then (\ref{ex8_5}) implies 
\begin{equation}
\left| T_{\bar{\alpha},n}(\ell _{\bar{\alpha}})\right|
^{2}+\left| v_{\bar{\alpha},n}(\ell _{\bar{\alpha}})\right|
^{2}=o(1).  \label{exx}
\end{equation}

Therefore (\ref{ex10}) can be rewritten as 
\begin{eqnarray}
\frac{1}{2}\int_{0}^{\ell _{\bar{\alpha}}} q^{\prime} \, \left| v_{%
\bar{\alpha},n}\right| ^{2}dx+\frac{1}{2}\int_{0}^{\ell _{\bar{%
\alpha }}} q^{\prime} \, \left| T_{\bar{\alpha},n}\right| ^{2}dx  \notag \\
+\frac{1}{2}\left( q(0)\left| v_{\bar{\alpha},n}(0)\right|
^{2}+q(0)\left| T_{\bar{\alpha},n}(0)\right| ^{2}\right) &=&o(1).
\label{ex16}
\end{eqnarray}
Taking $q=x$ in (\ref{ex16}) implies that $\left\| v_{\bar{\alpha}%
,n}\right\| _{L^{2}(0,\ell _{\bar{\alpha}})}=o(1)$ and $\left\| T_{%
\bar{\alpha},n}\right\| _{L^{2}(0,\ell _{\bar{\alpha}})}=o(1).$
Moreover, $\left\| u^{\prime}_{\bar{\alpha},n}\right\|
_{L^{2}(0,\ell _{\bar{\alpha}})}=\left\| T_{\bar{\alpha},n}-a_{%
\bar{\alpha}}v_{\bar{\alpha},n}\right\| _{L^{2}(0,\ell _{%
\bar{\alpha}})}=o(1).$

By letting $q=\ell _{\bar{\alpha}}-x$ in (\ref{ex16}) and by taking
into account the convergence of $v_{\bar{\alpha},n}$ and $T_{\bar{%
\alpha },n}$ in $L^{2}(0,\ell _{\bar{\alpha}}),$ we get 
\begin{equation}
v_{\bar{\alpha},n}(0)=o(1)\text{ and }T_{\bar{\alpha},n}(0)= o(1).
\label{ex101}
\end{equation}
Finally, notice that (\ref{exx}) signifies that 
\begin{equation}
v_{\bar{\alpha},n}(\ell _{\bar{\alpha}})=o(1)\text{ and }T_{%
\bar{\alpha},n}(\ell _{\bar{\alpha}})= o(1).  \label{ex102}
\end{equation}
To conclude, it suffices to prove that 
\begin{equation}
\left\| v_{\bar{\alpha},n}\right\| _{L^{2}(0,\ell _{\bar{\alpha}%
})}=o(1)\text{ and }\left\| u^{\prime}_{\bar{\alpha},n}\right\|
_{L^{2}(0,\ell _{\bar{\alpha}})}=\left\| T_{\bar{\alpha}%
,n}\right\| _{L^{2}(0,\ell _{\bar{\alpha}})}=o(1)  \label{ex103}
\end{equation}
for every $\bar{\alpha}\in I$ such that $e_{\bar{\alpha}}$ is
purely elastic.

To do this, starting by a string $e_{\bar{\alpha}}$ attached at one
end to only K-V strings. Using continuity condition of $\underline{v}$ and
the compatibility condition at inner nodes, implies that $e_{\bar{%
\alpha }}$ satisfies (\ref{ex101}) or (\ref{ex102}). Moreover, by taking $%
q=1 $ in (\ref{ex10}), we conclude that $e_{\bar{\alpha}}$ satisfies (%
\ref{ex101}) and (\ref{ex102}). Then using again (\ref{ex10}) with $q=x,$ we
deduce that (\ref{ex103}) is satisfied by $e_{\bar{\alpha}}.$ We
iterate such procedure on each maximally connected subgraph of purely elastic
strings (from leaves to the root).

Thus $\left\| (\underline{u}_{n},\underline{v}_{n})\right\| _{\mathcal{H}%
}=o(1),$ which contradicts the hypoyhesis $\left\| (\underline{u}_{n},%
\underline{v}_{n})\right\| _{\mathcal{H}}=1.$
\end{proof}

\begin{remark}
\begin{enumerate}
\item If for every $\bar{\alpha } \in I $, $a_{\bar{\alpha }}$ is continuous on $[0,\ell_{\bar{\alpha }}]$ and not vanish in such interval then we don't need the property (P) in the Theorem \ref{th}. 

Indeed (P) is used only to estimate $$-Re\left(\mathbf{i}\beta _{n}^{1-\gamma}\int_{0}^{\ell _{\bar{\alpha}}}T^{\prime}_{\bar{\alpha},n}a_{\bar{\alpha}}\overline{v_{\bar{%
\alpha },n}}dx\right)$$ in \rfb{ex8_1}, according to $\beta _{n}^{1-\frac{\gamma}{2}}\left\| a_{\bar{\alpha}}^{\frac{1}{2}}v_{\bar{\alpha}%
,n}\right\| _{L^{2}(0,\ell _{\bar{\alpha}})}$.

This is equivalent to estimate$$-Re\left(\mathbf{i}\beta _{n}^{1-\gamma}\int_{0}^{\ell _{\bar{\alpha}}}T^{\prime}_{\bar{\alpha},n}\overline{v_{\bar{%
\alpha },n}}dx\right)$$ according to $\beta _{n}^{1-\frac{\gamma}{2}}\left\|v_{\bar{\alpha}%
,n}\right\|_{L^{2}(0,\ell _{\bar{\alpha}})}$~:
$$
- \, Re \left( {\bf i} \beta_n^{1-\gamma} \, \int_0^{\ell_{\bar{\alpha}}} 
 T^{\prime}_{\bar{\alpha},n}\overline{v_{\bar{\alpha },n}}dx\right) =
- Re \left[{\bf i} \beta_n^{1-\gamma} \, T_{\bar{\alpha},n} \, \overline{v_{\bar{\alpha },n}}\right]_0^{\ell_{\bar{\alpha}}} + Re\left(\mathbf{i}\beta _{n}^{1-\gamma}\int_{0}^{\ell _{\bar{\alpha}}}T_{\bar{\alpha},n} \overline{v^{\prime}_{\bar{%
\alpha },n}}dx\right) =
$$
$$
- Re \left[{\bf i} \beta_n^{1-\gamma} \, T_{\bar{\alpha},n}(x) \, \overline{v_{\bar{\alpha },n}}(x)\right]_0^{\ell_{\bar{\alpha}}} + o(1)
$$
as in case (ii) (proof of Theorem \ref{th}) we prove without using (P) 
that 
$$
- Re \left[{\bf i} \beta_n^{1-\gamma} \, T_{\bar{\alpha},n}(x) \, \overline{v_{\bar{\alpha },n}}(x)\right]_0^{\ell_{\bar{\alpha}}}  \leq \frac{\beta _{n}^{2-\gamma}}{4} \, \left\|v_{\bar{\alpha}%
,n}\right\|^2_{L^{2}(0,\ell _{\bar{\alpha}})} + o(1).
$$

\item We find here the particular cases studied in \cite{liu11,liu111,riv,hassine,liu1}. Note that concerning the result of polynomial stability in \cite{riv,hassine} the authors proved that the $\frac{1}{t^2}$ decay rate of solution is optimal when the damping coefficient is a characteristic function.
\end{enumerate}

\end{remark}

\section{Further comments: graph case}
In this section we want generalize the previous results to a general graph. Then we suppose that $\mathcal {T}$ is a connected graph $\mathcal {G}$  (then $\mathcal {G}$ can contains some circuits).

\begin{center}
\includegraphics[scale=0.80]{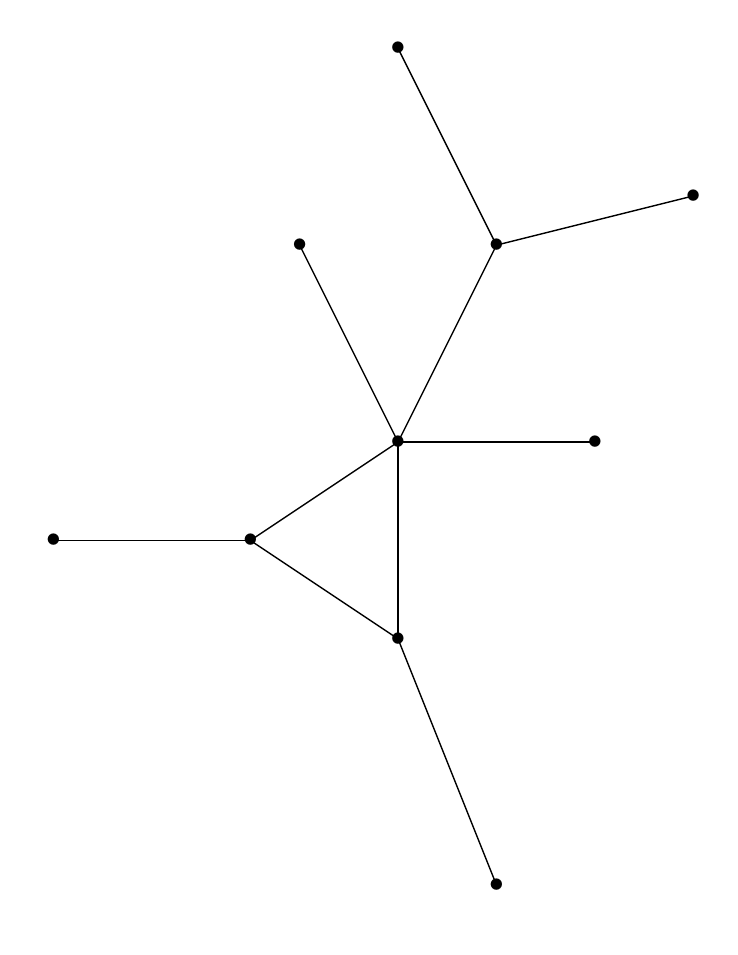} 
\captionof{figure}{A Graph} 
\end{center}

We conserve the same notations as in $\mathcal{T}$, we just replace $\mathcal{S}$ by $\mathcal{S}^{\prime}$ which is the set of all the external nodes, and $I_{\mathcal{S^{\prime}}}$  denote the set of such nodes. Then $I=I_{\mathcal{M}}\cup I_{\mathcal{S^{\prime}}}$. We denote by $J$ the set $%
\{1,...,N\}$ and for $k\in I$ we will denote by $J_{k}$ the set of indices
of edges adjacent to $s_{k}$. If $k\in I_{\mathcal{S}^{\prime}},$ then the index of
the unique element of $J_{k}$ will be denoted by $j_{k}.$

We suppose that the graph is directed, then we need to define the incidence matrix $D=(d_{kj})_{p\times N}, p = |I|,$ as follows,
\begin{equation*}
d_{kj}=\left\{ 
\begin{tabular}{l}
$1$ if $\pi _{j}(\ell _{j})=s_{k},$ \\ 
$-1$ if $\pi _{j}(0)=s_{k},$ \\ 
$0$ otherwise,
\end{tabular}
\right. 
\end{equation*}
The system \rfb{ch8eq18}-\rfb{ch8eq38} is rewritten as follows
\begin{equation}
\label{ch8eq18_1}
\frac{\partial^2
u_{j}}{\partial t^2}(x,t) - \, \frac{\partial }{\partial x} \left( \frac{\partial u_{j}}{\partial x} + a_{j}(x) \, 
\frac{\partial^2 u_{j}}{\partial x \partial t} \right) (x,t) = 0, \quad
0<x<\ell_{j},\ t>0, \, j \in J, 
\end{equation} 
\begin{equation}
u_{j_{k}}(s_{k},t)=0,
\, k \in I_{{\mathcal S}^{\prime}}, \, t >0, 
\end{equation} 
\begin{equation}
u_{j}(s_{k},t)=
u_{l}(s_{k},t), \quad t>0, \, j,l\in J_{k},\;\;k\in I_{\mathcal{M}},
\end{equation} 
\begin{equation}
\label{ch8cont_1}
 \ds \sum\limits_{j\in J_{k}}d_{kj} \left(
\frac{\partial u_{j}}{\partial x}(s_{k},t)+a_{j}(s_{k}) \frac{\partial^2 u_{j}}{\partial x \partial t}(s_{k},t) \right) =
0, 
\quad t>0, \, k \in I_{{\mathcal M}}, 
\end{equation} 
\begin{equation} 
\label{ch8eq38_1}
u_{j}(x, 0) = u^0_{j}(x), \, \frac{\partial u_{j}}{\partial t} (x, 0) = u^1_{j}(x), \quad
0<x<\ell_{j}, \, j \in J. 
\end{equation} 
As in the case of a tree, we suppose that $\mathcal{G}$ contains at least a K-V edge and that every maximal subgraph of purely elastic edges is (a tree), the
leaves of which K-V edges are attached. Furthermore, we suppose that $\mathcal{S}^{\prime}\neq \emptyset.$

Finally the property (P) is rewritten as follows,
\begin{equation*}
(P')\;\;\;\forall j\in J,\;\;a_{j}^{\prime },a_{j}^{\prime \prime }\in L^{\infty }(0,\ell _{j})\;\; \text{and}\;\; \;\;\forall k\in I_{\mathcal{M}%
},\;\; \ds \sum\limits_{j\in J_{k}}d_{kj}a_{j}^{\prime }(s_{k})\geq 0. 
\end{equation*}

Under the property (P') the system \rfb{ch8eq18_1}-\rfb{ch8eq38_1} is polynomially stable, and it is exponentially stable if and only if $\underline{a}$ is continuous at each inner nodes, i.e., $u_{j}(s_{k})=
u_{l}(s_{k}), \quad t>0, \, ;j,l\in J_{k},\;\;k\in I_{\mathcal{M}}$.


\begin{thebibliography}{99}
\bibitem{riv} M. Alves, J. M. Revera, M. Sep\'ulveda, O. V. Villagr\'an and M. Z. Gary, The asymptotic behavior of the linear transmission problem in viscoelasticity, {\em Math. Nachr.,} {\bf 287} (2014), 483-497.

\bibitem{ammarinicaise} K. Ammari and S. Nicaise, Stabilization of elastic systems by collocated feedback, Lecture Notes in Mathematics, $2124$, Springer, Cham, 2015.

\bibitem{ammer} K. Ammari and D. Mercier, Boundary feedback stabilization of a chain of serially connected strings, {\em Evolution Equations and Control Theory,} {\bf 1} (2015), 1-19.

\bibitem{ammerbis} K. Ammari, D. Mercier and V. R\'egnier, Spectral analysis of the Schr\"odinger operator on binary tree-shaped networks and applications, {\em Journal of Differential Equations,} {\bf 259} (2015), 6923-6959.

\bibitem{amregvalmer} K. Ammari, D. Mercier, V. R\'egnier and J. Valein, Spectral analysis and stabilization of a chain of serially connected Euler-Bernoulli beams and strings, {\em Commun. Pure Appl. Anal.,} {\bf 11} (2012), 785-807.

\bibitem{amtu}  K. Ammari and M. Tucsnak, Stabilization of Bernoulli-Euler beams by means of a pointwise feedback force, {\em
SIAM Journal on Control and Optimization}, {\bf 39} (2000), 1160-1181.

\bibitem{ammari1} K. Ammari, A. Henrot and M. Tucsnak, Asymptotic behaviour of the solutions and optimal location of the actuator for the
pointwise stabilization of a string, {\em Asymptotic Analysis,} {\bf 28} (2001), 215-240. 

\bibitem{ammari4} K. Ammari and M. Jellouli, Remark in stabilization of tree-shaped networks of
strings, {\em Appl. Maths.,} {\bf 4} (2007), 327-343.  

\bibitem{ammari} K. Ammari and M. Tucsnak, Stabilization of second order evolution equations by a class of unbounded feedbacks, {\em ESAIM Control Optim. Calc. Var.,} {\em ESAIM Control Optim. Calc. Var,} {\bf 6} (2001), 361-386. 

\bibitem{amjel} K. Ammari and M. Jellouli, Stabilization of star-shaped networks of strings, {\em Diff. Integral. Equations,}
{\bf 17} (2004), 1395-1410. 

\bibitem{amjellk} K. Ammari, M. Jellouli and M. Khenissi, Stabilization of generic trees of strings, 
{\em J. Dyn. Cont. Syst.,} {\bf 11} (2005), 177-193. 

\bibitem{arendt} W. Arendt and C. J. K. Batty, Tauberian theorems and stability of one-parameter
semigroups. Trans. Amer. Math. Soc., {\bf 305} (1988), 837--852.

\bibitem{banks2} H. T. Banks, R. C. Smith and Y. Wang, {\em Smart Materials Structures}, Wiley, 1996.

\bibitem{BT}
A. Borichev and Y. Tomilov, Optimal polynomial decay of functions
and operator semigroups, {\em Math. Ann.,} {\bf 347} (2010), 455--478.

\bibitem{brezis} H. Brezis, {\em Analyse Fonctionnelle, Th\'eorie et Applications}, Masson, Paris, 1983.

\bibitem{dagerzuazua} R. D\'ager and E. Zuazua, {\em Wave propagation, observation and control in {$1\text{-}d$} flexible multi-structures}, volume 50 of Math\'ematiques \& Applications (Berlin), Springer-Verlag, 2006.

\bibitem{hassine} F. Hassine, Stability of elastic transmission systems with a local Kelvin-Voigt damping, {\em Eur. J. Control.,} {\bf 23} (2015), 84-93. 

\bibitem{huang} F. Huang,  Characteristic conditions for exponential
stability of linear dynamical systems in Hilbert space, {\em Ann.
Differential Equations}, {\bf 1}(1985), 43-56.

\bibitem{liubis} S. Chen, K. Liu and Z. Liu, Spectrum and stability for elastic systems with global or local Kelvin-Voigt damping, {\em SIAM J. Appl. Math.,} {\bf 59} (1999), 651-668.

\bibitem{lagnese} J. Lagnese, G. Leugering and E. J. P. G. Schmidt, {\em Modeling, Analysis of dynamic elastic multi-link structures}, Birkh\"auser, Boston-Basel-Berlin, 1994. 

\bibitem{lr}
Z. Liu and B. Rao, Frequency domain characterization of rational
	decay rate for solution of linear evolution equations, {\em Z. Angew.
Math. Phys.,} {\bf 56} (2005), 630--644.

\bibitem{liu1} Z. Liu and Q. Zhang, Stability of a string with local Kelvin-Voigt damping and non-smooth coefficient at interface, {\em ESAIM Control Optim. Calc. Var.,} {\bf 23} (2017), 443-454.

\bibitem{liu11} K. Liu and Z. Liu, Exponential decay of energy of the Euler-Bernoulli beam with locally distributed Kelvin-Voigt damping, {\em SIAM J. Control Optim.,} {\bf 36} (1998), 1086-1098.

\bibitem{liu111} K. Liu and Z. Liu, Exponential decay of energy of vibrating strings with local viscoelasticity, {\em Z. Angew. Math. Phys.,} {\bf 53} (2002), 265-280.

\bibitem{nir} Z. Liu and S. Zheng, 
{\em Semigroups associated with dissipative systems},
Chapman \& Hall/CRC Research Notes in Mathematics, 398. Chapman \& Hall/CRC, Boca Raton, FL, 1999. 

\bibitem{liu2} K. Liu, Z. Liu and Q. Zhang, Eventual differentiability of a string with local Kelvin-Voigt damping, {SIAM Journal on Control and Optimization,} {\bf 54} (2016), 1859-1871.

\bibitem{Pazy} A. Pazy, {\em Semigroups of linear operators and applications to partial differential equations}, Springer, New York, 1983.

\bibitem{pruss} J. Pr\"{u}ss,  On the spectrum of $C_0$-semigroups, {\em Trans. Amer. Math. Soc.}, {\bf 248}(1984),  847-857.

\bibitem{tucsnakbook} M. Tucsnak and G. Weiss, Observation and control for operator semigroups, {\em Birkh\"auser Advanced Texts: Basler Lehrb\"ucher}, Birkh\"auser Verlag, Basel, 2009. 

\end{thebibliography}
\end{document}